\documentclass[11pt,reqno,fleqn]{amsart}

\usepackage{amsfonts,amsmath,amsthm,amssymb}
\usepackage{setspace}

\newtheorem{thm}{Theorem}[section]
\newtheorem{defin}{Definition}[section]
\newtheorem{lemma}{Lemma}[section]
\newtheorem{prop}{Proposition}[section]

\newtheorem*{thmA}{Theorem A}
\newtheorem*{thmB}{Theorem B}

\newcommand{\R}{\ensuremath{\mathbb{R}}}
\newcommand{\Z}{\ensuremath{\mathbb{Z}}}

\newcommand{\N}{\ensuremath{\mathbb{N}}}
\newcommand{\PP}{\ensuremath{\mathbb{P}}}
\newcommand{\EE}{\ensuremath{\mathbb{E}}}
\newcommand{\1}{\ensuremath{\mathbf{1}}}

\newcommand{\subs}{\ensuremath{\subseteq}}

\newcommand{\eps}{\ensuremath{\varepsilon}}
\newcommand{\la}{\ensuremath{\lambda}}
\newcommand{\La}{\ensuremath{\Lambda}}
\newcommand{\al}{\ensuremath{\alpha}}

\newcommand{\de}{\ensuremath{\delta}}

\newcommand{\eq}{\begin{equation}
\newcommand{\ee}{\end{equation}}}

\numberwithin{equation}{section}

%
\setlength{\topmargin}{0pt}
\setlength{\oddsidemargin}{.0in}
\setlength{\textwidth}{6.5truein}
\setlength{\textheight}{9truein}
\setlength{\evensidemargin}{.0in}

\begin{document}
\title{Constellations in $\PP^d$}
\author{Brian Cook\quad\quad\quad\'Akos Magyar}
\thanks{The second author was supported by NSERC Grant 22R44824.}

\address{Department of Mathematics, The University of British Columbia, Vancouver, BC, V6T1Z2, Canada}
\email{bcook@math.ubc.ca}
\address{Department of Mathematics, University of British Columbia, Vancouver, B.C. V6T 1Z2, Canada}
\email{magyar@math.ubc.ca}

\begin{abstract} Let $A$ be a subset of positive relative upper density of $\PP^d$, the $d$-tuples of primes. We prove that $A$ contains an affine copy of any set $e\subs\Z^d$, as long as $e$ is in \emph{general position} in the sense that the set $e\cup\{0\}$ has at most one point on every coordinate hyperplane.
\end{abstract}

\maketitle

\section{Introduction.}
\subsection{Background}
The celebrated theorem of Green and Tao \cite{GT} states that subsets of positive relative upper density of the primes contain an affine copy of any finite set of the integers, in particular contain arbitrary long arithmetic progressions. It is natural to ask if similar results hold in the multi-dimensional settings, especially in light of the multi-dimensional extensions of the closely related theorem of Szemer\'{e}di \cite{SZ} on arithmetic progressions in dense subsets of the integers. Indeed such a result was obtained by Tao \cite{T}, showing that the Gaussian primes contain arbitrary constellations. In the same paper the problem of finding constellations in dense subsets of $\PP^d$ was raised and briefly discussed.

The difficulty in this settings comes from two facts. First, the natural majorant of the $d$-tuples of primes is not pseudo-random with respect to the box norms, which replace the Gowers' uniformity norms in the multi-dimensional case. This may be circumvented by assuming the set $e$ is in \emph{general position} as described below, as is already suggested in \cite{T}. However even under the this non-degeneracy assumption, the so-called \emph{correlation conditions} in \cite{GT} do not seem to be sufficient, and a key observation of this note is to use more general correlation conditions to obtain the dual function estimates in the multi-dimensional case. Also, we need to use an abstract \emph{transference principle} due to Gowers \cite{G2} and independently to Reingold, Trevisan, Tulsiani and Vadhan \cite{RTTV}, see also Tao and Ziegler \cite{TZ}.

\subsection{Main Results.}

Let $e=\{e_1,\ldots,e_l\}\in (\Z^d)^l$ be a set of vectors; a constellation defined by $e$ is then a set $e'=\{x,x+te_1,\ldots,x+te_l\}$ where $t\neq 0$ is a scalar, that is and affine image of the set $e\cup\{0\}$.

\begin{defin} We say that a set of $l$ vectors $e\in (\Z^d)^l$ is in general position, if $|\pi_i(e\cup\{0\})|=l+1$ for each $i$, where $\pi_i$ is the orthogonal projection to the $i^{th}$ coordinate axis.
\end{defin}

Let us also recall that a subset $A$ of the $d$-tuples of primes $\PP^d$ is of positive upper relative density if
\[\limsup_{N\to\infty}\ \frac{|A\cap [1,N]^d|}{\pi(N)^d}\ >0\]
Our main result is then the following

\begin{thm} \label{prime} Given any set $A\subs\PP^d$ of positive relative upper density, we have that $A$ contains infinitely many constellations defined by a set of vectors $e\in (\Z^d)^l$ in general position.
\end{thm}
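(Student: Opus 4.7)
The plan is to follow the Green--Tao transference machinery, adapted to the multi-dimensional box-norm setting. Fix $e=\{e_1,\ldots,e_l\}\subs\Z^d$ in general position, let $\de>0$ be the upper relative density of $A$, and for large $N$ work with the weighted function $f(n)=\1_A(n)\,\Lambda(n_1)\cdots\Lambda(n_d)$ restricted to the box $[1,N]^d$, where $\Lambda$ denotes the von Mangoldt function. It suffices to establish a positive lower bound, uniform as $N\to\infty$, for the counting functional
\[ T(f,\ldots,f)\,:=\,\EE_{x,t}\ f(x)\,f(x+te_1)\cdots f(x+te_l), \]
where the averages run over $x\in[1,N]^d$ and $t\in[1,N]$; positivity of $T(f,\ldots,f)$ then forces infinitely many non-trivial constellations in $A$ of the desired shape, since each prime contributes at most $O(\log N)$ to each $\Lambda$-weight.

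Next I would construct a pseudorandom majorant $\nu:\Z^d\to\R^+$ with $f\<\nu$. The natural candidate is the product $\nu(n)=\prod_{i=1}^d\nu_0(n_i)$ of a truncated Selberg-sieve majorant $\nu_0$ on $\Z$ as in \cite{GT}. Two types of estimates must be verified. First, a \emph{linear forms condition} adapted to the box norm, asserting that the $(l+1)$-fold mixed moments of $\nu$ along the constellation $\{x,x+te_1,\ldots,x+te_l\}$ are asymptotically $1$. Because $\nu$ factors coordinatewise, and the general position hypothesis guarantees that in each coordinate $i$ the projections $\pi_i(e\cup\{0\})$ yield $l+1$ distinct affine forms in $(\pi_i(x),t)$, this reduces to $d$ independent one-dimensional linear forms estimates already handled in \cite{GT}. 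Second, a family of \emph{generalized correlation conditions} strong enough to yield the multi-dimensional dual-function estimates needed by the transference machinery.

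With $\nu$ in hand I would invoke the abstract transference principle of Gowers \cite{G2} and of Reingold--Trevisan--Tulsiani--Vadhan \cite{RTTV}: it produces a function $\tilde f$ with $0\le\tilde f\le 1$, $\EE\tilde f\gs\de$, and $T(f,\ldots,f)=T(\tilde f,\ldots,\tilde f)+o(1)$ as $N\to\infty$. A multi-dimensional Szemer\'edi-type density theorem applied to $\tilde f$ on $[1,N]^d$ then delivers $T(\tilde f,\ldots,\tilde f)\gs c(\de)>0$, and this positivity transfers back to $T(f,\ldots,f)$, completing the argument.

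The principal obstacle is the second class of estimates in step two. The classical correlation conditions of \cite{GT} suffice to control Gowers uniformity norms in one dimension, but do not appear to bound the dual functions that arise in the multi-dimensional box-norm expansion, since the relevant shifts now lie in $\Z^d$ rather than $\Z$. The technical heart of the paper should be the formulation and verification of a generalized correlation condition for the product majorant $\nu$, involving more intricate mixed products along each coordinate axis. Once again the general position hypothesis is essential: after separating the product structure of $\nu$, it forces the resulting coordinatewise correlation sums to involve $l+1$ distinct affine forms in $t$, placing them within reach of the standard Green--Tao-type correlation estimates.
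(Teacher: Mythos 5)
Your outline follows the same strategy as the paper --- a tensor-product Green--Tao majorant, a box-norm generalized von Neumann inequality, a strengthened correlation condition giving the dual-function estimates, the Gowers/RTTV transference principle, and the Furstenberg--Katznelson theorem --- and you correctly isolate the paper's key new ingredient, the generalized correlation condition in the box-norm setting. However, two concrete gaps remain in your reduction. First, you take $f=\1_A\,\La(n_1)\cdots\La(n_d)$ and propose to majorize it by a product of Selberg-sieve majorants ``as in Green--Tao''; without the $W$-trick this cannot work, since no pseudorandom majorant of $\La$ itself satisfies the linear forms condition, because of biases modulo small primes. The paper instead passes to the modified function $\bar\La_b$ supported on the progression $Wn+b$, chooses by pigeonhole a residue $b$ for which the weighted density of $A$ along that progression stays at least $\al/2$ (see (4.6)), majorizes $c_d\,\bar\La_b^d\,\1_A$ by $\otimes^d\nu$ only after restricting to the box $[\eps_1N,\eps_2N]^d$ (estimates (4.8)--(4.9)), and uses that same restriction to convert constellations in $\Z_N^d$ into genuine constellations in $\Z^d$ (Lemma 4.2, via primitivity of $e$ and the quantity $\tau(e)$). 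Your proposal deals with the trivial $t=0$ contribution but not with the $W$-trick, the selection of $b$, or the wrap-around issue; these steps are standard but cannot be skipped.

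Second, the transference principle by itself does not yield $T(f,\ldots,f)=T(\tilde f,\ldots,\tilde f)+o(1)$: it only produces a decomposition $f=g+h$ with $g$ bounded and $\|h\|_{\Box(e')^d}$ small. Converting box-norm smallness of $h$ into smallness of its contribution to the counting functional is precisely the generalized von Neumann inequality (Proposition 2.1), whose iterated Cauchy--Schwarz proof requires the linear forms condition at level $((d^2+2d)2^{d-1},\,2d^2+d)$ --- far more forms than the $(l+1)$-fold mixed moments you describe as ``the'' linear forms condition. These stronger estimates do follow from the Goldston--Yildirim/Green--Tao method, but the hypothesis must be formulated at that level; similarly, the correlation condition needed for the dual-function estimate is the $(d,2^d)$ condition involving several pairwise independent forms $\phi_i$ (Definition 1.5), which is exactly what the paper's Appendix verifies. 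With these points supplied, your argument matches the paper's proof.
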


\noindent\textbf{Remarks:} We note that for $d=1$ this translates back the above described theorem of Green and Tao \cite{GT}, as any finite subset of $\Z$ is in general position.

Also, one may assume that $l=d$ and the set $e=\{e_1,\ldots ,e_d\}\subs\Z^d$ forms a basis in $\R^d$ besides being in general position, by passing to higher dimensions. Indeed, if $e\in(\Z^d)^l$ then let $\{f_1,\ldots ,f_l\}\subs\Z^l$ be linearly independent vectors, and define a basis \newline $e'=\{e'_1=(e_1,f_1),\ldots,e'_l=(e_l,f_l),e'_{l+1},\ldots ,e'_{l+d}\}\subs \Z^{d+l}$ by extending the linearly independent set of vectors $e'_i=(e_i,f_i),\ (1\leq i\leq l)$. If $e$ was in general position then it is easy to make the construction so that $e'$ is also in general position, and if the set $A':=A\times \PP^l$ contains a constellation $x'+te'$, then $A$ contains $x+te$. Thus from now on we will always assume that $e$ is also a basis of $\R^d$.

Theorem \ref{prime} may be viewed as a relative version of the so-called Multidimensional Szemer\'{e}di Theorem \cite{FK}, stating that any subset of $\Z^d$ of positive upper density contains infinitely many constellations defined by any finite set of vectors $e\subs\Z^d$. As is customary, we will work in the "finitary" settings, when the underlying space is the group $\Z_N^d=(\Z/N\Z)^d$, $N$ being a large prime. In this settings we need the following, more quantitative version:

\begin{thmA} [Furstenberg-Katznelson \cite{FK}]  Let $\al>0$, $d\in\N$ and let $e=\{e_1,\ldots,e_d\}\subs\Z_N^d$ be a fixed set of vectors.
If $f:\Z_N^d\to [0,1]$ is a given function such that $\EE (f(x):\,x\in\Z_N^d)\geq\al$, then one has
\eq\EE(f(x)f(x+te_1)\ldots f(x+te_d):\ x\in\Z_N^d,\,t\in\Z_N)\geq c(\al,e)\ee
where $c(\al,e)>0$ is a constant depending only on $\al$ and the set $e$.
\end{thmA}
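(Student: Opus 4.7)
The plan is to reduce Theorem A to its ergodic-theoretic counterpart via the Furstenberg correspondence principle, and then to establish that counterpart through the multiple recurrence machinery of Furstenberg and Katznelson. Suppose, toward a contradiction, that the conclusion fails along a sequence of primes $N_k\to\infty$ with functions $f_k:\Z_{N_k}^d\to [0,1]$ of mean at least $\al$. A standard ultralimit construction then yields a probability space $(X,\mathcal{B},\mu)$ equipped with $d$ commuting measure-preserving transformations $T_1,\ldots,T_d$ (where $T_i$ corresponds to translation by $e_i$) and a measurable set $A\subs X$ with $\mu(A)\geq\al$ for which
\[\liminf_{M\to\infty}\,\frac{1}{M}\sum_{n=1}^M\mu(A\cap T_1^{-n}A\cap\cdots\cap T_d^{-n}A)\,=\,0.\]
It therefore suffices to show this liminf is strictly positive for every such commuting system and every set of positive measure.

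To prove the ergodic statement, I would follow an SZ-type structure argument. One builds a transfinite tower of $(T_1,\ldots,T_d)$-invariant factors $\mathcal{B}_0\subs\mathcal{B}_1\subs\cdots\subs\mathcal{B}$ in which each successive extension is either \emph{compact} (generated, relative to the previous factor, by almost-periodic functions under the $T_i$) or \emph{relatively weakly mixing}. The recurrence bound is trivial at the base $\mathcal{B}_0$; one then shows it lifts through both kinds of extensions: through compact extensions by a relative almost-periodicity argument producing recurrence along a syndetic set of return times, and through weakly mixing extensions by a relative van der Corput / Cauchy-Schwarz argument showing that such extensions do not reduce the multiple average below that of the base factor. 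A maximality argument carries the positive lower bound to the full $\sigma$-algebra $\mathcal{B}$, which contradicts the vanishing liminf extracted above and thereby yields the desired constant $c(\al,e)>0$.

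The principal obstacle is the weakly mixing step in dimension $d\geq 2$. Unlike the one-dimensional case of a single transformation, one must simultaneously control $d$-fold correlations of the form $\mu(T_1^{-n}A_1\cap\cdots\cap T_d^{-n}A_d)$ involving distinct commuting transformations, whose joint equidistribution is considerably more delicate. Furstenberg and Katznelson handle this through an iterative van der Corput scheme applied against the ratios $T_iT_j^{-1}$, combined with an induction on a carefully chosen complexity measure of the tuple $(T_1,\ldots,T_d)$ that is strictly reduced at each step. The dependence of $c(\al,e)$ on the geometry of $e$ enters precisely through this step, reflecting the differences $e_i-e_j$ of the defining vectors; once the ergodic multiple recurrence is in hand, passing back through the correspondence yields Theorem A with an effective, albeit non-explicit, constant.
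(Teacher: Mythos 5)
You are being compared against a paper that does not actually prove Theorem A: it is quoted as a black box from Furstenberg--Katznelson \cite{FK}, so the relevant benchmark is the standard derivation of this finitary counting statement from the ergodic multiple recurrence theorem. Your description of the ergodic core (the tower of compact and relatively weakly mixing extensions, lifting recurrence through each type) is a fair citation-level summary of the known proof and I have no quarrel with it as such. The genuine gap is in your very first step, the extraction of the limit system.

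The hypothesis you negate is that $\EE(f_k(x)\prod_j f_k(x+te_j):x\in\Z_{N_k}^d,\ t\in\Z_{N_k})\to 0$, i.e.\ smallness of the correlation averaged over \emph{all} shifts $t\in\Z_{N_k}$. In the ultralimit (Loeb) system, however, $\mu(A\cap T_1^{-n}A\cap\cdots\cap T_d^{-n}A)$ for a fixed integer $n$ is the ultralimit of the finite correlations at that \emph{fixed} shift $n$, and the ergodic average $\frac1M\sum_{n=1}^M$ only sees shifts in the bounded range $[1,M]$ -- an asymptotically negligible subset of $\Z_{N_k}$. Smallness of the global average over all $t$ gives no control on these bounded shifts (a priori the correlations could be concentrated exactly there), so the claimed system with $\liminf=0$ does not follow from ``a standard ultralimit construction''; this is precisely the point where a Varnavides-type ingredient must enter, and it is also where the paper's statement gets its uniform constant $c(\al,e)$. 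Two standard repairs: (i) first deduce from \cite{FK}, via the usual correspondence, the qualitative multidimensional Szemer\'edi theorem (any subset of $[1,L]^d$ of density $\geq\al/2$ contains a constellation once $L\geq L_0(\al,e)$), and then run the Varnavides averaging over affine copies $x+s\cdot[1,L_0]^d$ inside $\Z_N^d$, counting multiplicities, to get the stated lower bound on the $\Z_N$-average; or (ii) keep your compactness scheme but first replace $A_k$ by a random dilate $\la\cdot A_k$ (using that $N_k$ is prime), noting that for fixed $n\neq0$ the correlation of $\la A_k$ at shift $n$ averages over $\la$ to the global average over $t\neq0$; a Markov/union-bound argument then produces dilates whose correlations at all shifts $n\leq M_k$, $M_k\to\infty$ slowly, are small, and only then does the ultralimit system contradict Furstenberg--Katznelson. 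A minor additional point: your $f_k$ are $[0,1]$-valued, so to obtain a set $A$ of measure $\gs\al$ you should pass to the level set $\{f_k\geq\al/2\}$ (losing only a factor $(\al/2)^{d+1}$ in the count) or else formulate the recurrence statement for functions.
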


Here we used the "expectation" notation: $\EE(f(x):\ x\in A)=\frac{1}{|A|}\sum_{x\in A} f(x)$.

In the relative settings, when $A\subs\PP^d$, the condition: $\ \EE (f(x): x\in\Z_N^d)\geq\al$ (after identifying $[1,N]^d$ with $\Z_N^d$) does not hold for the indicator function $f=\1_A$, however it holds for $f=\1_A \La^d$ where $\La^d$ is the $d$-fold tensor product of the von Mangoldt function $\La$. The price one pays is that the function $f$ is no longer bounded uniformly in $N$. Following the strategy of \cite{GT} we will show that the $d$-fold tensor product $\otimes^d\nu$ of the pseudo-random measure $\nu$ used in \cite{GT} is sufficiently random in our settings in order to apply the transference principle of \cite{G2}; we will refer to such measures $\nu$ as \emph{$d$-pseudo-random} measures. We postpone the definition of $d$-pseudo-random measures to the next section, but state our main result in the finitary settings below:

\begin{thm}\label{finite}
Let $\alpha>0$ be given, and $d$ be fixed. There exists a constant
$c(\alpha,e)>0$ such that the following holds. If $0\leq f \leq \mu$ is a given function on $\Z_N^d$ such that $\mu=\otimes^d\nu$ where $\nu$ is
$d$-pseudo-random, and $\mathbb{E}(f(x):\,x\in\Z_N^d)\geq \alpha$, then for any basis
$e=\{e_1,...,e_d\}$ in general position, we have that
\eq
\mathbb{E}(f(x)f(x+te_1)...f(x+te_d):x\in\mathbb{Z}_N^d,t\in\mathbb{Z}_N)\geq
c(\al,e)\ee
\end{thm}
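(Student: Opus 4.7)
The plan is to follow the Green--Tao transference strategy, adapted to the multi-dimensional setting. The target is a lower bound on
\[
\La(f,\ldots,f) := \EE\bigl(f(x) f(x+te_1) \cdots f(x+te_d):\ x\in\Z_N^d,\ t\in\Z_N\bigr),
\]
for $f$ dominated by the pseudo-random tensor $\mu=\otimes^d\nu$. Three ingredients are needed: (i) a \emph{generalized von Neumann theorem}, controlling the $(d+1)$-linear form $\La(f_0,\ldots,f_d)$ in a suitable box-type norm $\|\cdot\|_\Box$ on $\Z_N^d$; (ii) an \emph{abstract transference principle} giving a decomposition $f=g+h$ with $0\leq g\leq 1+o(1)$, $\EE(g)\geq\al-o(1)$, and $\|h\|_\Box=o(1)$; and (iii) Theorem A applied to $g$.

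For (i), expand $\La(f_0,\ldots,f_d)$ with each $|f_i|\leq 1+\mu$ and perform $d$ successive Cauchy--Schwarz steps, doubling the translation variable $t$ at each step. After suitable linear changes of variables, one arrives at an average of products of $f_i$'s over parallelepipeds, which is precisely a box-type norm of a single $f_i$. The crucial input is that $e$ is in \emph{general position} and is a basis: this ensures that at each Cauchy--Schwarz step the doubling can be routed along a distinct coordinate direction, so that the resulting box norm involves one duplication per coordinate rather than degenerate cross-correlations that would mix coordinates and cease to be controllable by $\mu$.

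For (ii) and (iii), the abstract transference principle of \cite{G2},\cite{RTTV} reduces the production of $g$ and $h$ to proving \emph{dual function estimates} for $\mu$: bounded linear combinations of the duals arising from the Cauchy--Schwarz duplications in step (i) must correlate with $\mu$ as they would with the constant function $1$, up to $o(1)$. Because $\mu=\otimes^d\nu$ and the relevant dual functions factor across coordinates, these estimates can be reduced to one-dimensional correlation estimates for $\nu$ that follow from the $d$-pseudo-randomness hypothesis. Granted the decomposition, Theorem A applied to the bounded model $g/(1+o(1))$ yields $\La(g,\ldots,g)\geq c(\al,e)$. Expanding $\La(f,\ldots,f)=\La(g+h,\ldots,g+h)$ by multilinearity and invoking (i) on each of the $2^{d+1}-1$ terms containing at least one $h$ factor produces an error of $o(1)$, so $\La(f,\ldots,f)\geq c(\al,e)/2$ for $N$ sufficiently large.

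The main obstacle will be the dual function / correlation estimates needed to run the transference principle. As the introduction emphasizes, the standard Green--Tao correlation conditions, which bound only translates of $\nu$ at simple linear patterns, do not appear sufficient in the multi-dimensional setting: the box-norm dual functions here involve more intricate shift patterns and require a broader family of correlation bounds. Verifying that the Green--Tao majorant (tensored with itself) satisfies these generalized correlation conditions, and thereby is genuinely $d$-pseudo-random, is the technical heart of the argument.
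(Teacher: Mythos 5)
Your proposal follows essentially the same route as the paper: a generalized von Neumann inequality via iterated Cauchy--Schwarz (using that $e$ is a basis in general position), the Gowers/RTTV transference principle made applicable by showing the box norm is a QAP norm through dual function estimates that reduce, via the tensor structure of $\mu=\otimes^d\nu$, to the generalized $(m_0,m_1)$ correlation condition on $\nu$, and finally Theorem A applied to the bounded part $g$ with the $h$-terms absorbed as $O(\|h\|_{\Box(e)^d})$ errors. The paper's proof is exactly this argument, so your outline is correct in both structure and the identification of the generalized correlation conditions as the key technical input.
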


\subsection{Norms, Transference, and Pseudo-random Measures}

First we introduce the $d$-dimensional box norms. We actually
introduce one norm for each linearly independent set of vectors \newline $\{e_1,...,e_d\}\subs\Z_N^d$.

For a function $f:\Z_N^{d}\rightarrow \mathbb{C}$
this norm with respect to a basis $e$ is given by\[
||f||_{\Box(e)^d}^{2^d} =
\mathbb{E}(\prod_{\omega\in\{0,1\}^d}f(x+\omega t
e):x\in\Z_N^d,\ t\in\Z_N^d)\] with the notation
$\omega t e=\omega_1t_1e_1+...+\omega_dt_de_d.$

That this is actually norm is not immediate, but for the standard basis it can be shown by repeated applications of the Cauchy-Schwarz inequality, similarly as for the Gowers norms (see for example  \cite{G1}). For a different basis, note that
we have $||f||_{\Box(e)^d}=||f\circ T||_{\Box^d}$ for an appropriate linear transformation $T$, where $||f||_{\Box^d}$ is the norm with
respect to the standard basis. The same way one shows \cite{G1} that the analogue of the so-called Gowers-Cauchy-Schwarz inequality holds

\begin{prop} ( $\Box^d(e)$-Cauchy-Schwarz inequality)

Given $2^d$ functions, indexed by elements of $\{0,1\}^d$, we have
\[
\langle
f_\omega:\omega\in\{0,1\}^d\rangle=\mathbb{E}(\prod_{\omega\in
\{0,1\}^d}f_{\omega}(x+\omega t
e):x\in\Z_N^d,t\in\Z_N^d)\leq \prod_{\omega\in
\{0,1\}^d}||f_\omega||_{\Box(e)^d}\]
\end{prop}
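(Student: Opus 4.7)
The plan is to first reduce to the standard-basis case via a $\Z_N$-linear change of variables, and then prove the inequality for the standard basis by $d$ iterated applications of the Cauchy--Schwarz inequality, one per coordinate direction, exactly as in the argument for the Gowers uniformity norms in \cite{G1}.

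For the reduction, let $T : \Z_N^d \to \Z_N^d$ be the linear map sending the standard basis vector $\mathbf{e}_i$ to $e_i$ for each $i$. Since $\{e_1,\ldots,e_d\}$ is a basis of $\R^d$ and $N$ is a sufficiently large prime, $T$ is invertible over $\Z_N$. The substitution $x = Ty$ in the expectation defining $\langle f_\omega:\omega\in\{0,1\}^d\rangle$, combined with the identity $\omega t e = T(\omega_1 t_1 \mathbf{e}_1+\cdots+\omega_d t_d \mathbf{e}_d)$, transforms the inner product into the analogous quantity for the standard basis applied to the tuple $f_\omega \circ T$; by the same substitution inside each norm, $\|f_\omega\|_{\Box(e)^d} = \|f_\omega \circ T\|_{\Box^d}$. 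Hence it suffices to establish the inequality for the standard basis.

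For the standard basis, I would apply Cauchy--Schwarz once per direction $t_k$, $k=1,\ldots,d$. At the $k$-th step, split the product of functions according to the value of $\omega_k\in\{0,1\}$, grouping together the $f_\omega$ with $\omega_k=0$ and those with $\omega_k=1$, the latter group carrying a shift by $t_k \mathbf{e}_k$. Applying Cauchy--Schwarz in the remaining averaging variables separates the two groups and doubles the configuration by introducing an independent ``second copy'' of the shift parameter $t_k$. After $d$ such doublings each $f_\omega$ reappears exactly $2^d$ times in the $\Box^d$-cube configuration, the expectation factorises as a product over $\omega$, and taking the $2^d$-th root yields the stated bound.

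The only real difficulty is book-keeping: one must check inductively that after the $k$-th Cauchy--Schwarz step the expression is still a $\Box^d$-type average of $2^d$ functions---each a shifted copy of some $f_\omega$---but now parameterised by $k$ pairs of independent ``doubled'' shift variables in place of $t_1,\ldots,t_k$. Once this invariant is maintained through all $d$ steps, the inequality falls out with the claimed right-hand side $\prod_{\omega}\|f_\omega\|_{\Box(e)^d}$.
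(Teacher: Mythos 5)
Your proposal is correct and follows essentially the same route the paper indicates: the paper likewise reduces to the standard basis via the identity $\|f\|_{\Box(e)^d}=\|f\circ T\|_{\Box^d}$ for the linear map $T$ sending the standard basis to $e$ (invertible mod the large prime $N$), and then invokes the usual $d$-fold iterated Cauchy--Schwarz argument for the box/Gowers norms from \cite{G1}. Your sketch of the induction and the final factorisation into $\prod_\omega\|f_\omega\|_{\Box(e)^d}$ is the standard bookkeeping and is sound.
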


Gowers presents an alternative approach to the Green-Tao
Transference Theorem from a more functional analytic
point of view, making use of the Hahn-Banach Theorem. The specific
version he provides will be presented below after we
recall some definitions. First we note that $||\cdot||^*$ is
the defined to be the dual norm of $||\cdot||$.

\begin{defin}\label{QAP}
Let $||\cdot||$ be a norm on $\mathcal{H}=L^2(\mathbb{Z}_n)$ such
that $||f||_{L^{\infty}}\leq||f||^*$, and let
$X\subseteq\mathcal{H}$ be bounded. Then $||\cdot||$ is a
\textit{quasi algebra predual (QAP) norm} with respect to $X$ if
there exists an operator
$\mathcal{D}:\mathcal{H}\rightarrow\mathcal{H}$, a positive function $c$ on $\mathbb{R}$ and an increasing positive function $C$ on $\mathbb{R}$ satisfying:

\textit{(i)} $\langle f,\mathcal{D}f\rangle\leq1$ \textit{for all}
$f\in X$,

\textit{(ii)} $\langle f,\mathcal{D}f\rangle\geq c(\epsilon)$
\textit{for every} $f\in X$ \textit{with} $||f||\geq \epsilon$, and

\textit{(iii)} $||\mathcal{D}f_1...\mathcal{D}f_K||^*\leq C(K)$
\textit{for any} $f_1,...,f_K\in X$.
\end{defin}

This definition in enough to state the transference principle.

\begin{thmB} \label{transference} (Gowers \cite{G2})
Let $\mu$ and $\omega$ be non-negative functions on $Y,$ $Y$ finite,
with $||\mu||_{L^1},||\omega||_{L^1}\leq1$, and $\eta,\delta>0$ be
given parameters. Also let $||\cdot||$ be a QAP norm with respect to
$X$, the set of all functions bounded above by $\max\{\mu,\omega\}$ in
absolute value. There exists an $\epsilon$ such that the following
holds: If we have that $||\mu-\omega||<\epsilon$, then for every
function with $0\leq f\leq\mu$ there exists a function $g$ with
$0\leq g\leq \omega/(1-\delta)$ and $||f-g||\leq \eta.$
\end{thmB}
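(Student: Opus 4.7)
The plan is the Hahn--Banach / convex duality approach of Gowers. Set
\[K = \{g \in \mathcal{H} : 0 \leq g \leq \omega/(1-\delta)\} \quad \text{and} \quad B_\eta = \{h \in \mathcal{H} : \|h\| \leq \eta\};\]
both are closed and convex in the finite-dimensional space $\mathcal{H}$, and the conclusion is equivalent to the assertion $f \in K + B_\eta$. Assume for contradiction that it fails. The Hahn--Banach separation theorem then produces $\psi \in \mathcal{H}$ with $\|\psi\|^* \leq 1$ satisfying
\[\langle f, \psi \rangle \,>\, \sigma_K(\psi) + \eta,\]
where the support function $\sigma_K(\psi) = \sup_{g \in K} \langle g, \psi \rangle$ is evaluated explicitly as $(1-\delta)^{-1}\langle \omega, \psi_+ \rangle$ (attained at $g = (1-\delta)^{-1}\omega \cdot \mathbf{1}_{\psi > 0}$). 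Since $0 \leq f \leq \mu$ and $\psi_- \geq 0$ give $\langle f, \psi \rangle \leq \langle \mu, \psi_+\rangle$, rearrangement yields
\[\langle \mu - \omega, \psi_+ \rangle \,>\, \eta + \tfrac{\delta}{1-\delta}\langle \omega, \psi_+\rangle \,\geq\, \eta. \qquad (\dagger)\]

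The main obstacle is that $\|\psi\|^* \leq 1$ affords no bound on $\|\psi_+\|^*$, so one cannot apply $\|\mu - \omega\| < \epsilon$ directly to $(\dagger)$ via H\"older. The three QAP axioms are designed precisely to overcome this, and the standard route -- in place of a single application of Hahn--Banach -- is a greedy iteration. Start with $g_0 = 0 \in K$; at stage $i$, either $\|f - g_i\| \leq \eta$ and the desired decomposition is at hand, or else $h_i := f - g_i$ (after harmless rescaling into $X$) satisfies $\langle h_i, \mathcal{D}h_i\rangle \geq c(\eta)$ by axiom (ii). I update $g_{i+1}$ to be the truncation into $K$ of $g_i + \tau\, \mathcal{D}h_i$, with step size $\tau \sim c(\eta)$. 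Axiom (i) bounds the initial ``energy'' $\langle h_0, \mathcal{D}h_0 \rangle$ by $1$, and a standard descent calculation shows this energy decreases by at least $\Omega(c(\eta)^2)$ per step, forcing termination after $N = N(\eta)$ iterations.

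Upon termination, $g_N \in K$ satisfies $\|f - g_N\| \leq \eta$, and $g_N$ is expressible as a combination of at most $N$ outputs $\mathcal{D}h$ with $h \in X$ together with truncation corrections; axiom (iii) then bounds $\|g_N\|^* \leq C(N) = C(\eta,\delta)$, and the truncation errors, tested in the dual norm against $\mu - \omega$, are controlled by the same constant via products of $\mathcal{D}h_i$'s. Consequently the net contribution of the greedy construction to the left side of $(\dagger)$ is at most $O(\epsilon \cdot C(\eta, \delta))$. Choosing $\epsilon = \epsilon(\eta, \delta) < \eta / (3C(\eta, \delta))$ in the hypothesis $\|\mu - \omega\| < \epsilon$ makes this contribution strictly less than $\eta$, contradicting $(\dagger)$ and proving the existence of a suitable $g$. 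I expect the main technical difficulty to be making the iteration quantitatively uniform -- in particular, keeping $N(\eta)$ and the degree of the resulting polynomial in $\mathcal{D}$-outputs depending only on $\eta$ and $\delta$, independently of the dimension $|Y|$ -- which is exactly what the QAP axiomatization is tailored to deliver.
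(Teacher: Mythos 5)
The paper does not prove Theorem B; it quotes it from Gowers \cite{G2} (and the parallel work of Reingold--Trevisan--Tulsiani--Vadhan) and uses it as a black box, so your attempt can only be measured against the argument in the literature. Your first half is exactly the right start of Gowers's proof: the reduction to $f\in K+B_\eta$, the Hahn--Banach separation, the computation $\sigma_K(\psi)=(1-\delta)^{-1}\langle\omega,\psi_+\rangle$, and the resulting inequality $\langle\mu-\omega,\psi_+\rangle>\eta$ are all correct, and you correctly identify the obstacle that $\|\psi\|^*\leq 1$ gives no control over $\|\psi_+\|^*$.

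The second half, however, has a genuine gap. The greedy iteration you propose is not justified by the QAP axioms: $\mathcal{D}$ is an arbitrary (in general nonlinear) operator, and nothing in (i)--(iii) yields a descent inequality for $\langle h_i,\mathcal{D}h_i\rangle$ under the update $g_{i+1}=\Pi_K(g_i+\tau\mathcal{D}h_i)$; the ``standard descent calculation'' you invoke is the one for $L^2$ energies with linear projections and does not transfer. Moreover the logic does not close: if the iteration terminates you are done without $(\dagger)$, and if it does not terminate you have derived nothing about $\psi_+$, so the final sentence claiming a contradiction with $(\dagger)$ is unsupported --- the quantities built from the $g_i$ never appear in $(\dagger)$. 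What is actually needed, and what axioms (i)--(iii) are designed for, is a direct attack on $\psi_+$: first, a duality (bipolar) argument using (i) and (ii) shows that any $\psi$ with $\|\psi\|^*\leq 1$ is well approximated by a bounded combination $\sum_i\lambda_i\mathcal{D}f_i$ with $f_i\in X$ and $\sum_i|\lambda_i|$ controlled (this is where (ii), which you never use, enters); second, since $\|\mathcal{D}f\|_{L^\infty}\leq\|\mathcal{D}f\|^*\leq C(1)$, the function $t\mapsto\max(t,0)$ can be replaced by a Weierstrass polynomial $P$ on a fixed compact interval, so that $\psi_+\approx P\bigl(\sum_i\lambda_i\mathcal{D}f_i\bigr)$ expands into boundedly many products $\mathcal{D}f_{i_1}\cdots\mathcal{D}f_{i_k}$, each of dual norm at most $C(k)$ by (iii). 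Then $\langle\mu-\omega,\psi_+\rangle$ is bounded by $\epsilon$ times a constant depending only on $\eta,\delta$ plus $\|\mu-\omega\|_{L^1}\leq 2$ times the uniform approximation error, which contradicts $(\dagger)$ for $\epsilon$ small. The polynomial approximation step is the heart of the theorem and is absent from your sketch.
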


\noindent\textbf{Remarks:} By a simple re-scaling of the norms the constants 1 in Definition \ref{QAP} and Theorem B can be replaced by any other fixed constants. The actual form given by Gowers is more
explicit, in fact giving a specific choice of $\epsilon$. However,
for our purposes, we only need such an $\epsilon$ that is independent
of the size of $Y$. Also, for our purpose one may choose $\omega\equiv 1$ and $\de=1/2$.


The definition of a pseudo-random measure in this paper will
be slightly stronger than that of Green and Tao, adapted to the higher dimensional settings. Let us begin with the one dimensional case. Following \cite{GT}, we define a \textit{measure} to be a
function $\nu:\Z_N\rightarrow \mathbb{R}$ to be a
non-negative function such that \[
\mathbb{E}(\nu(x):x\in\Z_N)=1+o(1).\] where the $o(1)$ notation means a quantity which tends to 0 as $N\to\infty$. A measure will be deemed
pseudo-random if it satisfies two properties at a specific level.
The first of these is known as the linear forms condition, as we will use only forms with integer coefficients we need a slightly simplified version.

\begin{defin} \label{forms} (Green-Tao \cite{GT})
Let $\nu$ be a measure, and $m_0,t_0\in\mathbb{N}$ be small
parameters. Then $\nu$ satisfies the $(m_0,t_0)$-\textit{linear
forms condition} if the following holds. For $m\leq m_0$ and $t\leq
t_0$ arbitrary, suppose that $\{L_{i,j}\}_{1\leq i\leq m, 1\leq
j\leq t}$ are integers, and that $b_i$ are arbitrary elements
of $\Z_N$. Given $m$ linear forms
$\phi_i:\Z_N^t\rightarrow \Z_N$ with \[
\phi_i(x)=\sum_{j=1}^t L_{i,j}x_j + b_i,\]
$x=(x_1,...,x_t)$ and $b=(b_1,...,b_t)$, if we have that each
$\psi_i$ is nonzero and that they are pairwise linearly independent,
then \eq\mathbb{E}\,\left(\prod_{i=1}^m \nu(\phi_i(x)):x\in\Z_N^t\right)=1+o(1),\ee where
the $o(1)$ term is independent of the choice of the $b_i$'s.
\end{defin}

The next condition is referred to as the correlation condition.

\begin{defin}\label{correlation}
Let $\nu$ be a measure. Then $\nu$ satisfies the $(m_0,m_1)$
correlation condition if for every $1\leq m\leq m_0$ there exists a function $\tau=\tau_m:\Z_N\to\R_+$ such that for all $k\in\N$
\[ \mathbb{E}(\tau^k(x):\,x\in\Z_N)=O_{m,k}(1)\] and also
\eq
\mathbb{E\,}\left(\prod_{i=1}^{m_1}\prod_{j=1}^{m_0}
\nu(\phi_i(y)+h_{i,j}):y\in\Z_N^r\right)\leq
\prod_{i=1}^{m_0}\left(\sum_{1\leq j<j'\leq m_0}\tau(h_{i,j}-h_{i,j'})\right)\ee
where the functions $\phi_i:\Z_N^r\rightarrow\Z_N$ are pairwise independent linear forms.
\end{defin}

\noindent\textbf{Remarks:}

This is a stronger condition that what is used in \cite{GT}, in fact they used the special case when
$m_1=1$, and $\phi$ is the identity. We define below a  $d$-pseudo-random measure to be a measure
satisfying these conditions at specific levels.

\begin{defin}\label{d-random}
We call a measure $\nu$ a \textit{$d$-pseudo-random} if
 if $\nu$ satisfies the \newline $((d^2+2d)2^{d-1},\,2d^2+d)$-linear
forms condition and the $(d,2^d)$-correlation condition
\end{defin}

We will deal with $d$-fold tensor product of measures, $\mu=\otimes_{i=1}^d \nu$ and call them $d$-measures. We will call such a $d$-measure $\mu$ to be pseudo-random if the corresponding measure $\nu$ is $d$-pseudo-random. Finally, note that for a $d$-measure
\[
\mathbb{E}(\mu(x):x\in\Z_N^d)=\prod_{i=1}^d\mathbb{E}
(\nu(x_i):x_i\in\Z_N)=1+o(1).\]


\subsection{Outline of the Paper.}

In Sections 2-3 we prove two key propositions, the so-called \emph{generalized von Neumann inequality} and the \emph{dual function estimate}. The first roughly says that the number of constellations defined by a set $e$ is controlled by the appropriate box norm. The second is the essential step in showing that the box norms are $QAP$ norms.

In Section 4, we prove our main results assuming that the measure exhibited in \cite{GT} is also $d$-pseudo-random in the sense defined above. First we show Theorem \ref{finite}, which follows then easily from the Transference Principle, that is from Theorem B. Next, we prove Theorem \ref{prime} by a standard argument passing from $\Z_N$ to $\Z$.

Finally, in an Appendix, we prove $d$-pseudo-randomness of the measure $\nu$ used by Green and Tao, slightly modifying their arguments of Sec.10 in \cite{GT} based on earlier work of Goldston and Yildrim \cite{GY1} \cite{GY2}.

\section{The Generalized von Neumann inequality.}

Let $e=\{e_1,\ldots,e_d\}\subs\Z_N^d$ be a base of $\Z_N^d$ which is also in general position, which in this settings means that $|\pi_i (e\cup\{0\})|=d+1$ for each $i$ where $\pi_i:\Z_N^d\to\Z_N$ is the orthogonal projection to the $i$-th coordinate axis.

\begin{prop}\label{neumann} (Generalized von Neumann Inequality)

Let $w=otimes^d\nu$ be a pseudo-random $d$-measure.
Given a function $0\leq f\leq w$, we have that
\eq\Lambda f:=
\mathbb{E}\,(f(x)f(x+te_1)...f(x+te_d):\,x\in\Z_N^d,t\in\Z_N) = O(||f||_{\Box(e')^{d}})\ee where $e'=\{e_d,e_d-e_1,...,e_d-e_{d-1}\}$.
\end{prop}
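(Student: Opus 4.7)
\emph{Plan.} I begin by rewriting $\Lambda f$ so that the box structure associated to $e'$ becomes transparent. The substitution $y = x + te_d$ followed by $t \mapsto -t$ gives
\[
\Lambda f = \mathbb{E}_{y,t}\, f(y) \prod_{i=0}^{d-1} f(y + t w_i),
\qquad w_0 = e_d,\ w_i = e_d - e_i \ (1 \le i \le d-1),
\]
so that the $d$ shift directions are exactly the vectors of $e'$ and one copy of $f$ sits at the ``origin'' $y$. This is the natural launching point for the iterated Cauchy--Schwarz argument of Green--Tao type.

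Next I would apply the Cauchy--Schwarz inequality $d$ times, in the spirit of the generalized von Neumann argument of \cite{GT}. Each step uses the weighting trick
\[
|\mathbb{E}_z f(z) G(z)|^2 \le \mathbb{E}_z w(z) \cdot \mathbb{E}_z w(z) G(z)^2 = (1 + o(1))\, \mathbb{E}_z w(z) G(z)^2,
\]
valid because $f \le w$ and $\mathbb{E} w = 1 + o(1)$. At the $k$-th step ($k=1,\dots,d$) I would double the $t$-parameter via a fresh variable $s_{k-1} \in \Z_N$, and then shift $y$ so as to move one of the resulting $f$-factors onto the point $y$ itself, leaving a shifted copy of $w$ in its place. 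After $d$ iterations the surviving $f$-factors occupy the $2^d$ vertices
\[
\Bigl\{\, y + \sum_{i=0}^{d-1} \omega_i s_i w_i : \omega \in \{0,1\}^d \,\Bigr\}
\]
of a parallelepiped with sides $w_0, \dots, w_{d-1}$, reproducing exactly the structure of $\|f\|_{\Box(e')^d}^{2^d}$, while the non-$f$ content is a product of $w$-factors evaluated at $y$ plus various integer linear combinations of $s_0, \dots, s_{d-1}$ and the original $t$.

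To finish, the expectation of this residual weight factor must be shown to be $O(1)$. Because $w = \otimes^d \nu$ is a tensor product, this expectation splits coordinate by coordinate. In coordinate $i$, the general position hypothesis $|\pi_i(e \cup\{0\})|=d+1$ forces the projections $\pi_i(w_0), \dots, \pi_i(w_{d-1})$ to be non-zero and pairwise distinct, so the one-dimensional linear forms appearing in the argument of $\nu$ are pairwise linearly independent, and the linear forms condition of Definition \ref{forms} (at the level prescribed by Definition \ref{d-random}) yields $1 + o(1)$. In the degenerate coordinates, where two projections happen to coincide for some $\omega$, the repeated $\nu$-factors at coincident points are absorbed by the correlation condition of Definition \ref{correlation}, producing an $O(1)$ bound in terms of bounded moments of the auxiliary function $\tau$. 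Combining these factorwise contributions yields $|\Lambda f|^{2^d} \le O(1) \cdot \|f\|_{\Box(e')^d}^{2^d}$ and the proposition follows by taking $2^d$-th roots.

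The main obstacle is the bookkeeping of the Cauchy--Schwarz iteration. One must (i) pick the substitution at the $k$-th step so that the surviving $f$-factors assemble at the vertices of a genuine parallelepiped with sides $w_0, \dots, w_{d-1}$ rather than a ``skew'' configuration, and (ii) ensure that the residual weight factor, projected coordinate by coordinate, consists of $\nu$-factors either at pairwise linearly independent linear forms (so that the linear forms condition applies) or at a bounded number of coincidences (so that the correlation condition applies). The general position hypothesis is exactly what limits these coincidences to individual coordinates, thereby keeping the correlation condition usable in place of the stronger linear forms bound.
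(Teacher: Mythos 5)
Your opening substitution and the overall skeleton (iterated weighted Cauchy--Schwarz, doubling the single parameter $t$ with fresh variables, shifting the base point at each step, then invoking pseudorandomness on the leftover weight) are the same as the paper's, but two of your concrete claims do not hold up. First, the final configuration you announce is not what the iteration produces. The very first Cauchy--Schwarz must split off the unique $t$-independent factor $f(y)$ and replace it by a copy of $w$; that block is then made of $w$'s forever after, so the $2^d$ surviving $f$-factors are the doublings of one of the shifted blocks, and the sides of the resulting box are the differences between the surviving block's direction and each eliminated direction (including $0$), not the original shifts $w_0,\dots,w_{d-1}$. For $d=2$, starting from $\mathbb{E}_{y,t}\,f(y)f(y+tw_0)f(y+tw_1)$ with $w_0=e_2$, $w_1=e_2-e_1$, two applications give a box over $\{w_1,\,w_1-w_0\}=\{e_2-e_1,\,-e_1\}$, or over $\{w_0,\,w_0-w_1\}=\{e_2,\,e_1\}$ if the other block survives --- in no case over $\{w_0,w_1\}=e'$. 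To land exactly on $\Box(e')^d$ as the proposition asserts, one runs the iteration in the original coordinates, eliminating the $f(x)$, $e_1,\dots,e_{d-1}$ blocks and keeping the $e_d$ block, which is what the paper does; your preliminary change of variables, though a correct identity, steers the bookkeeping toward a different (albeit still general-position) box norm, so your claim that the vertices are $y+\sum_i\omega_i s_i w_i$ is a genuine miscalculation rather than a harmless normalization.

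Second, the endgame is not a valid reduction. After the iteration the residual product of $w$-factors shares its variables with the $f$-box, so proving that its expectation is $O(1)$ does not yield $\Lambda^{2^d}\le O(1)\,\|f\|_{\Box(e')^d}^{2^d}$; you cannot factor the expectation, and H\"older-type decoupling destroys the box structure. The paper's route is to write the weight as $1+(W-1)$, keep the exact box norm as the main term, and bound the error by one more Cauchy--Schwarz using $f\le w$, after which $(W-1)^2$ is expanded and the linear forms condition is applied; the substantive work is verifying that all the forms involved, projected to each coordinate, are pairwise linearly independent (coefficient $1$ in $x$, nonzero coefficient of $t_1$ in every coordinate), and this is precisely where general position is used. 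In particular, under the general position hypothesis no coincidences among the projected forms occur in this argument, so your plan to absorb ``degenerate coordinates'' with the correlation condition is misplaced: in this paper the correlation condition is needed only for the dual function estimate of Proposition \ref{dual}, while the generalized von Neumann inequality uses the linear forms condition alone. You would need to supply the $1+(W-1)$ decomposition, the final Cauchy--Schwarz, and the coordinatewise independence check to close the argument.
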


\begin{proof}
We shall apply the Cauchy-Schwartz inequality several times. Begin
by writing \[ \Lambda f\equiv\Lambda
=\mathbb{E}(f(x)\prod_{i=1}^df(x+t_1e_i):x\in\Z_N^{d},t_1\in\Z_N).\]
Push through the summation on $t_1$ and split the $f$ to write this
as\[
\mathbb{E}(\sqrt{f(x)}\,\mathbb{E}(\sqrt{f(x)}\prod_{i=1}^df(x+t_1e_i):t_1\in\Z_N):x\in\Z_N^{d}).\]
Applying Cauchy-Schwartz to get\[
\Lambda^2\leq\mathbb{E}(w(x)\prod_{i=1}^df(x+t_1e_i)\prod_{j=1}^df(x+t_1e_j+t_2e_j):t_1,t_2\in\Z_N,x\in\Z_N^{d}),\]
where we have made the substitution $t_2\mapsto t_1+t_2$ for the new
variable. Note that there should be a $\EE(w(x))=1+o(1)$ multiplier, following
from the fact that $f\leq w$ and from the linear forms condition, but for convenience we suppress it and will continue to do
so (this is a big O result, so this is not of any consequence). We
make one further substitution, $x\mapsto x-t_1e_1$, yielding\[
\Lambda^2\leq\mathbb{E}(w(x-t_1e_1)\prod_{i=2}^d
\prod_{\omega\in\{0,1\}}f(x+t_1e_i^{(1)}+\omega t^{(1)}e_i)
\prod_{\omega '\in\{0,1\}}f(x+\omega
't^{(1)}e_1):t_1,t_2\in\Z_N,x\in\Z_N^{d}),\] where
we have introduced the notations $e_i^{(j)}=e_i-e_j$, and
$t^{(i)}=\{t_{1+j}\}_{j=1}^i$. Note that the final product of this expression is independent of $t_1$.

We now repeat this procedure exactly, pushing through the $t_1$ sum
and splitting the terms independent of $t_1$, followed by a change
of variables. After $l$ applications of Cauchy-Schwarz inequality, we claim to have
\[ \Lambda^{2^l}\leq\mathbb{E}(W_l(x,t_1,...,t_{l+1})
\prod_{i=l+1}^d
\prod_{\omega\in\{0,1\}^l}f(x+t_1e_i^{(l)}+\omega t^{(l)}e_{i;l}))\]
\eq \times\prod_{\omega '\in\{0,1\}^{l}}f(x+\omega
't^{(l)}e_{l;l-1}):t_1,...,t_{l+1}\in\Z_N,x
\in\Z_N^{d}),\ee
\noindent for an appropriate weight function $W_l$ which is a product of
$w$'s, evaluated on linear forms which are pairwise linearly independent.

The notations introduced here are
$e_{i;l}=\{e_i,e_i^{(1)},...,e_i^{(l-1)}\}$ (note that $l>1$), and
$\omega t^{(l)}e_{i;l}=\omega_1t_2e_i+
\omega_2t_3e_i^{(1)}+...+\omega_lt_{l+1}e_i^{(l-1)}$.

To check this
form, using induction, apply the Cauchy-Schwarz inequality one more time with the new variable $t_1+t_{l+2}$ to
get
\[
\Lambda^{2^{l+1}}\leq\mathbb{E}(W_l(x,t_1,...,t_{l+1})W_l(x,t_1+t_{l+2},...,t_{l+1})\]
\[\times\prod_{i=l+1}^d
\prod_{\omega\in\{0,1\}^l}f(x+t_1e_i^{(l)}+\omega
t^{(l)}e_{i;l})f(x+t_1e_i^{(l)}+t_{l+2}e_i^{(l)}+\omega
t^{(l)}e_{i;l})
\]\[\times\prod_{\omega '\in\{0,1\}^{l}}w(x+\omega
't^{(l)}e_{l;l-1}):t_1,...,t_{l+2}\in\Z_N,x\in\Z_N^{d}).\]
Write
\eq
W_{l+1}'(x,t_1,...,t_{l+2})=W_l(x,t_1,...,t_{l+1})W_l(x,t_1+t_{l+2},...,t_{l+1})
\prod_{\omega
'\in\{0,1\}^{l}}w(x+\omega 't^{(l)}e_{l;l-1}).\ee
 We now apply the
substitution $x\mapsto x-t_1e_{l+1}^{(l)}$, note that
$e_i^{(l)}-e_{l+1}^{(l)}=e_i^{(l+1)}$, and set
\eq W_{l+1}(x,t_1,...,t_{l+2})=W_{l+1}'(x-t_1e_{l+1}^{(l)},t_1,...,t_{l+2}),\ee
This gives
\[
\Lambda^{2^{l+1}}\leq\mathbb{E}(W_{l+1}(x,t_1,...,t_{l+2})\times\prod_{i=l+2}^d
\prod_{\omega\in\{0,1\}^{l+1}}f(x+t_1e_i^{(l+1)}+\omega
t^{(l+1)}e_{i;l+1})
\]\[\times\prod_{\omega '\in\{0,1\}^{l+1}} f(x+\omega
't^{(l+1)}e_{l+1;l}):t_1,...,t_{l+2}\in\Z_N,x\in\Z_N^{d}).\]
and this is the form we wanted to obtain.

After $d-1$ iterations, one arrives at the form
\[
\Lambda^{2^{d-1}}\leq\mathbb{E}(W_{d-1}(x,t_1,...,t_d)\prod_{\omega
'\in\{0,1\}^{d}} f(x+\omega
't^{(d-1)}e_{d;d-1}):t_1,,...,t_d\in\Z_N,x\in\Z_N^{d}).\]
This may be written as
\[\Lambda^{2^{d-1}}\leq\mathbb{E}(\prod_{\omega
'\in\{0,1\}^{d}} f(x+\omega
't^{(d-1)}e_{d;d-1}):t_2,...,t_d\in\Z_N,x\in\Z_N^{d})+E,\]
where \[ E=\mathbb{E}((W_{d-1}(x,t_1,...,t_d)-1)\prod_{\omega
'\in\{0,1\}^{d}} f(x+\omega
't^{(d-1)}e_{d;d-1}):t_1,...,t_d\in\Z_N,x\in\Z_N^{d}).\]
To see that the main term is in fact an appropriate box norm, notice
that\[ e_{d;d-1}=\{e_d,e_d-e_1,...,e_d-e_{d-1}\}\] is also in
general position.

To deal with the error term $E$, we apply the Cauchy-Schwarz inequality one more time to get
\[E\leq\mathbb{E}((W(x,t_2,...,t_d)-1)^2\prod_{\omega
'\in\{0,1\}^{d}} w(x+\omega
't^{(d)}e_{d;d-1}):t_2,,...,t_{d+1}\in\Z_N,x\in\Z_N^{d}),\]
where we have set
\[
W(x,t_2,...,t_d)=\mathbb{E}(W_{d-1}(x,t_1,t_2,...,t_d):t_1\in\Z_N)\]
and again used the fact that $f\leq w$. Now to show that $E=o(1)$,
it is enough to show that the linear forms defining $W$ are
pairwise independent, after of course expanding $(W-1)^2$ and
applying the linear forms condition. By following the construction
of $W$, this amounts to showing that at each step $W_l$ satisfies
pairwise independence, which itself reduces to showing that the
coefficient of $x$ is 1 in each form and each form has a nonzero
coefficient in $t_1$ (in each coordinate).

To be more precise, the case $l=1$ is immediate. Assuming
this is so for $l$ fixed, then
\[W_{l+1}'(x,t_1,...,t_{l+2})=W_l(x,t_1,...,t_{l+1})W_l(x,t_1+t_{l+2},...,t_{l+1})\prod_{\omega
'\in\{0,1\}^{l}}w(x+\omega 't^{(l)}e_{l;l-1}).\] certainly satisfies
this, as the the forms in $W_l(x,t_1,...,t_{l+1})$ and
$W_l(x,t_1+t_{l+2},...,t_{l+1})$ are pairwise independent because
the $t_1$ coefficient is non-zero, and $\prod_{\omega
'\in\{0,1\}^{l}}w(x+\omega 't^{(l)}e_{l;l-1})$ is independent of
$t_1$. The statement about the coefficient of $x$ is obvious. Also,
it not hard to see that the vector multiple of $t_1$ is either
$e_{l+1}$ or $e_{l+1}^{(i)}$ (for forms appearing after $i$
applications of Cauchy-Schwarz). Thus the statement is true for $l+1$.

The fact that $E=o(1)$ then follows directly from the $(d(d+2)2^{d-1},d(2d+1))$ linear forms condition.
\end{proof}

\section{The dual function estimate.}

As before we assume that a basis $e=\{e_1,...,e_d\}\subs \Z_N^d$ is given which is in general position.
We will use the notation $\omega y e= \omega_1 y_1 e_1+...+\omega_d y_d e_d$, for $\omega\in\{0,1\}^d$ and  $y\in\Z_N^d$. First we define the dual of a function $f:\Z_N^d\to\R$ with respect to the $\|\ \|_{\Box (e)^d}$ norm.

\begin{defin}. Let $f:\Z_N^d\to\R$ be a given function and let $e=\{e_1,...,e_d\}\subs \Z_N^d$ be a basis of $\Z_N^d$. The dual of the the function $f$ is the function
\eq \mathcal{D}f (x) = \EE\, (\prod_{\omega\in\{0,1\}^d,
\, \omega\neq0} f(x+\omega t e):t\in\Z_N^d)\ee
\end{defin}

\begin{prop}\label{dual}
With $X$ and $\mathcal{D}$ as above, and $e$ in general position, we
have\[||\mathcal{D}f_1...\mathcal{D}f_K||_{\Box(e)^d}^*\leq C(K)\]
for any $f_1,...,f_K\in X$.
\end{prop}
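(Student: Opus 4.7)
By duality, it suffices to establish $|\langle g,F\rangle|\leq C(K)\,\|g\|_{\Box(e)^d}$ for every $g$, where $F=\prod_{k=1}^K\mathcal{D}f_k$. Expanding each dual over its own variable $t^{(k)}\in\Z_N^d$ and substituting $s^{(k)}:=t^{(k)}-t^{(1)}$ for $k\geq 2$ (with $s^{(1)}:=0$) yields
\[F(x)=\EE\Big(\prod_{\omega\in\{0,1\}^d\setminus\{0\}}h_\omega^s(x+\omega t^{(1)}e):\,t^{(1)},s^{(2)},\ldots,s^{(K)}\in\Z_N^d\Big),\]
where $h_\omega^s(x):=\prod_{k=1}^K f_k(x+\omega s^{(k)}e)$. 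Setting $H_0=g$ and $H_\omega=h_\omega^s$ for $\omega\neq 0$ and applying the $\Box(e)^d$-Cauchy--Schwarz inequality of Proposition 1.1 for each fixed $s$,
\[\EE_{x,t^{(1)}}g(x)\prod_{\omega\neq 0}h_\omega^s(x+\omega t^{(1)}e)\leq\|g\|_{\Box(e)^d}\prod_{\omega\neq 0}\|h_\omega^s\|_{\Box(e)^d}.\]
Averaging in $s$ and using H\"{o}lder together with Jensen's inequality, the problem reduces to showing $\EE_s\|h_\omega^s\|_{\Box(e)^d}^{2^d}=O_K(1)$ for each $\omega\neq 0$.

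The bound $f_k\leq\mu$ and expansion of the box norm via an additional pair of variables $(y,r)\in\Z_N^d\times\Z_N^d$ control the last expectation by
\[\EE\Big(\prod_{\omega'\in\{0,1\}^d}\prod_{k=1}^K\mu(y+\omega're+\omega s^{(k)}e):y,r,s^{(2)},\ldots,s^{(K)}\in\Z_N^d\Big),\]
an average of a product of $2^dK$ translates of $\mu=\otimes^d\nu$. Since $\mu$ factorizes coordinate-wise, the inner $y_i$-expectation in coordinate $i$ is an average of $2^dK$ shifts of $\nu$. Applying the $(d,2^d)$-correlation condition produces an upper bound by a sum of $\tau$-values at the pairwise differences of these shifts; the $d$ coordinates are then combined using H\"{o}lder and the moment bounds $\EE(\tau^k)=O_k(1)$, and the averaging over $(r,s^{(2)},\ldots,s^{(K)})$ yields the desired $O_K(1)$ bound provided each such pairwise difference is a genuinely nonzero linear form in these variables.

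The main obstacle is the combinatorial check that the pairwise differences of all $2^dK$ shifts are nondegenerate in every coordinate. This is precisely where the general position of $e$ enters: the coefficient of $s_j^{(k)}$ in coordinate $i$ equals $\omega_j(e_j)_i$, which is nonzero for every $j$ with $\omega_j=1$ because $(e_j)_i\neq 0$ by the hypothesis $|\pi_i(e\cup\{0\})|=d+1$; combined with the obvious independence across distinct $s^{(k)}$-variables and the nondegeneracy of $r$-coefficients for $\omega'\neq\omega''$, this rules out accidental coincidences or proportionalities among the shifts. With this bookkeeping in place, the correlation and linear forms conditions of $\nu$ deliver a uniform constant $C(K)$, establishing the required QAP estimate.
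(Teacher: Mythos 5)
Your opening steps track the paper's argument: dualize, re-parametrize the dualizing variables so that the expression becomes an inner product of $2^d$ functions to which the $\Box(e)^d$-Cauchy--Schwarz inequality applies, then use H\"older/Jensen to reduce to bounding $\EE_s\|h_\omega^s\|_{\Box(e)^d}^{2^d}$ for each fixed $\omega\neq 0$. The gap is in how you finish. After bounding every $f_k$ by $\mu$ you are left, in each coordinate $i$, with an average over $y_i$ of a product of $2^dK$ shifted copies of $\nu$ along a single linear form, and you propose to control this by ``applying the $(d,2^d)$-correlation condition.'' That condition only covers a number of shifts fixed in advance (at most $2^d$ per form, $d$ forms), independent of $K$, whereas your application needs $2^dK$ shifts per form, i.e.\ a correlation hypothesis whose strength grows with $K$. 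Since $d$-pseudo-randomness (Definition 1.5) is fixed once and for all while $K$ is arbitrary, this step is not licensed by the assumptions; your nondegeneracy bookkeeping for the pairwise differences is fine but does not address this, because the problem is the unbounded number of $\nu$-factors inside a single correlation-condition application, not the shape of the differences.

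The paper avoids exactly this by exploiting the independence of the $K$ dualizing variables before the measure bound is exploited: after the substitution $t^i\mapsto t+t^i$ with a redundant average in $t$ (rather than your normalization $s^{(1)}=0$), the expectation over $(t^1,\ldots,t^K)$ factorizes, giving the $K$-th power of the single inner expectation $\EE\,(\prod_{\omega'\in\{0,1\}^d}\mu(x+\omega y e+\omega' t e):y\in\Z_N^d)$, which involves only $d$ forms $(\omega y e)_j$ with $2^d$ shifts each. The $(d,2^d)$-correlation condition is applied only there, yielding $\prod_{j}\sum_{\omega'\neq\omega''}\tau\bigl(((\omega'-\omega'')te)_j\bigr)$, and the $K$-dependence enters solely through the moment bound $\EE(\tau^{dK})=O_K(1)$ after H\"older in $t$, where general position guarantees that each form $((\omega'-\omega'')te)_j$ equidistributes in $\Z_N$. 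Note also that your choice $s^{(1)}=0$ leaves the $k=1$ factor without its own averaging variable, so even after inserting the missing factorization the $k=1$ factor would not fit the same pattern; the paper's redundant-variable substitution keeps all $K$ factors symmetric. To repair your argument, adopt that substitution and perform the factorization over $k$ before invoking the correlation condition, funneling all $K$-dependence into the $\tau$-moments.
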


\begin{proof}
We must show that \[ \langle
f,\mathcal{D}f_1...\mathcal{D}f_K\rangle\leq C_K ||f||_{\Box(e)^d}\]
by the definition of the dual norm. By applying the definition of
$\mathcal{D}f$, the LHS gives
\[ \langle
f,\mathcal{D}f_1...\mathcal{D}f_K\rangle=\mathbb{E}(f(x)\prod_{i=1}^K\mathbb{E}(\prod_{\omega\in\{0,1\}^d,
\, \omega\neq0}f_i(x+\omega
t^ie):t^i\in\Z_N^d):x\in\Z_N^d).\] Expanding out the
products then gives the RHS as\[
\mathbb{E}(\mathbb{E}(f(x)\prod_{\omega\in\{0,1\}^d, \,
\omega\neq0}\prod_{i=1}^Kf_i(x+\omega t^ie+\omega
te):x,t\in\Z_N^d):T=(t^1,...,t^K)\in(\Z_N^d)^K)\]
after a substitution $t^i\mapsto t+t^i$ for each $i$ for some fixed
$t$, and adding a redundant summation in $t$. Now we call $
F_{(\omega,T)}(x)=\prod_{i=1}^Kf_i(x+\omega t^ie)$ for non-zero
$\omega$, and  $F_{(0^d,T)}(x)=f(x)$. The last expression then
becomes\[ \mathbb{E}(\langle
F_{(\omega,T)}:\omega\in\{0,1\}^d\rangle:T\in\Z_N^d).\] By
applying the $\Box(e)$-Cauchy-Schwarz inequality, we have arrived at
\[ ||\mathcal{D}f_1...\mathcal{D}f_K||_{\Box(e)^d}^*\leq \mathbb{E}(
\prod_{\omega\in\{0,1\}^d,\, \omega\neq
0^d}||F_{(\omega,T)}||_{\Box(e)^d}:T\in\Z_N^d).\] An
application of the Holder inequality gives that the RHS is bounded
above by\[\prod_{\omega\in\{0,1\}^d,\, \omega\neq 0^d}\mathbb{E}(
||F_{(\omega,T)}||_{\Box(e)^d}^{2^d}:T\in(\Z_N^d)^K),\]
where we added one factor of the constant 1 function, which has
$L^q$-norm one for each $q$. Thus, we now just need to show that for
a fixed $\omega\neq0^d$ we have\[ \mathbb{E}(
||F_{(\omega,T)}||_{\Box(e)^d}^{2^d}:T\in(\Z_N^d)^K)=O(K)\]
for $T=(t^1,...,t^K)$.

We continue by expanding the last expression for a fixed
$\omega\neq 0^d$,
\[
||F_{(\omega,T)}||_{\Box(e)^d}^{2^d}:T\in(\Z_N^d)^K)=O(K)=\mathbb{E}(\prod_{\omega'\in\{0,1\}^d}\prod_{i=1}^Kf_i(x+\omega
t^ie+\omega'te):x,t,t^1,...,t^K\in\Z_N^d).\] The RHS
factorizes as\[
\mathbb{E}(\prod_{i=1}^K\mathbb{E}(\prod_{\omega'\in\{0,1\}^d}f_i(x+\omega
ye+\omega'te):y\in\Z_N^d):x,t\in\Z_N^d).\] Applying
the bound $f\leq\nu$ gives
\[\mathbb{E}(\mathbb{E}^K(\prod_{\omega'\in\{0,1\}^d}\nu(x+\omega
ye+\omega'te):y\in\Z_N^d):x,t\in\Z_N^d).\]

The inner sum is now split component wise \[
\mathbb{E}(\prod_{j=1}^d\prod_{\omega'\in\{0,1\}^d}\mu((\omega
ye)_j+(\omega'te+x)_j):y\in\Z_N^d),\] where the notation
$(x)_j$ denotes the $j^{th}$ coordinate. The terms $(\omega ye)_j$
represent the linear forms $\sum_{s=1}^d\omega_sy_s(e_s)_j$, which
satisfy the hypothesis in the $(d,2^d)$ correlation condition by the
assumptions on $e$. Hence we have
\[\mathbb{E}(\prod_{j=1}^d\prod_{\omega'\in\{0,1\}^d}\mu((\omega
ye)_j+(\omega'te+x)_j):y\in\Z_N^d)\leq
\prod_{j=1}^d\sum_{\omega'\neq\omega''}\tau(((\omega'-\omega'')te)_j),\]
as the $(x)_j$ terms drop out in the subtraction.

Plugging this bound back in gives\[
\mathbb{E}((\prod_{j=1}^d\sum_{\omega'\neq\omega''}\tau(((\omega'-\omega'')te)_j))^K:t\in\Z_N^d).\]
Making use of the triangle inequality in $\mathcal{L}^{dK}$, after
another application of Holder, reduces our task to bounding
\[
\prod_{j=1}^d\sum_{\omega'\neq\omega''}\mathbb{E}(\tau^{dK}(((\omega'-\omega'')te)_j):t\in\Z_N^d).\]
By the assumptions on $e$ and the fact that
$\omega'-\omega''\neq0^d$, $((\omega'-\omega'')te)_j$ provides a
uniform cover of $\Z_N$, and we may reduce this to \[
\mathbb{E}(\tau^{dK}(t):t\in\Z_N).\] This expression is
$O_K(1)$.
\end{proof}

\section{Proof of the main results.}

In this section we prove our main results under the assumption that the measure exhibited in \cite{GT} is $d$-pseudo-random, i.e. it satisfies Definition \ref{d-random}.

\subsection{Proof of Theorem \ref{finite}.}
Let $e=\{e_1,\ldots,e_d\}\subs\Z_N^d$ be a basis which is in general position. For a function
$f:\Z_N^d\to\R$ we define its dual by
\eq\mathcal{D}f(x)=\mathbb{E}(\prod_{\omega\in\{0,1\}^d,\omega\neq
0}f(x+\omega t e):t\in\Z_N^d).\ee
Then clearly
\eq \langle f,Df\rangle = \|f\|_{\Box (e)^d}^{2^d}\ee

Let $\mu=\otimes^d \nu$ be a pseudo-random $d$-measure, and let $X$ be the set of functions $f$ on $\Z_N^d$ such that $|f|\leq \mu$ pointwise.

\begin{lemma} The norm $\|\ \|_{\Box (e)^d}$ is a quasi algebra predual (QAP) norm, with respect to the set $X$ and the operator $D$.
\end{lemma}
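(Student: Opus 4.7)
The plan is to verify the three defining clauses (i)--(iii) of Definition \ref{QAP} one by one, all leveraged off a single driving identity. I would open by unfolding the definition of $\mathcal{D}$ to record that
\[
\langle f,\mathcal{D}f\rangle
=\EE\Bigl(\prod_{\omega\in\{0,1\}^d}f(x+\omega t e):x,t\in\Z_N^d\Bigr)
=\|f\|_{\Box(e)^d}^{2^d},
\]
which is the identity already displayed just above in the proof of Theorem \ref{finite}. Given this, clause (ii) is tautological: any $f\in X$ with $\|f\|_{\Box(e)^d}\geq\eps$ obeys $\langle f,\mathcal{D}f\rangle\geq\eps^{2^d}$, so I would simply take $c(\eps):=\eps^{2^d}$.

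For clause (i), my plan is to use the pointwise bound $|f|\leq\mu$ to obtain
\[
|\langle f,\mathcal{D}f\rangle|\leq \EE\Bigl(\prod_{\omega\in\{0,1\}^d}\mu(x+\omega t e):x,t\in\Z_N^d\Bigr),
\]
and then exploit the factorization $\mu=\otimes^d\nu$: the integrand splits coordinatewise into a product of $\nu\bigl(\phi_{\omega,j}(x,t)\bigr)$ over the $d\cdot 2^d$ affine linear forms
\[
\phi_{\omega,j}(x,t)=x_j+\sum_{i=1}^d\omega_i\,t_i\,(e_i)_j.
\]
The general position hypothesis forces $(e_i)_j\neq 0$ for every $i,j$, from which I would check pairwise linear independence of the $\phi_{\omega,j}$: two forms with different $j$ are distinguished by their $x$-coefficients, while two forms sharing a $j$ but distinct $\omega,\omega'$ differ by a nonzero combination of the $t_i$'s. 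The linear forms condition built into $d$-pseudo-randomness then bounds the right-hand side by $1+o(1)$, which after the normalization flexibility granted by the remark following Theorem B becomes $\leq 1$.

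Clause (iii) needs no new argument: it is exactly Proposition \ref{dual}, which was designed to deliver $\|\mathcal{D}f_1\cdots\mathcal{D}f_K\|^*_{\Box(e)^d}\leq C(K)$ for $f_1,\dots,f_K\in X$. The auxiliary hypothesis $\|f\|_{L^\infty}\leq\|f\|^*$ is a normalization matter handled by the same rescaling. The substantive analytic work has already been carried out in Section 3, so I expect no real obstacle beyond the bookkeeping in (i); the one delicate point, such as it is, is the pairwise-independence verification for the forms $\phi_{\omega,j}$, and it is precisely there that the general-position hypothesis on $e$ earns its keep.
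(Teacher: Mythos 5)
Your proposal is correct and follows essentially the same route as the paper: clause (iii) is delegated to Proposition \ref{dual}, clause (ii) is immediate from the identity $\langle f,\mathcal{D}f\rangle=\|f\|_{\Box(e)^d}^{2^d}$, and clause (i) follows from $|f|\leq\mu$ together with the coordinatewise factorization of $\mu=\otimes^d\nu$ and the linear forms condition applied to the pairwise independent forms $(x+\omega te)_j$. Your verification of pairwise independence via general position is in fact slightly more explicit than the paper's one-line remark, but the argument is the same.
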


\begin{proof} We have already shown part (iii) of Definition \ref{QAP}, which was the content of Proposition \ref{dual}. If $\|f\|_{\Box (e)^d}^d\leq\eps$ then
\[\langle f,Df\rangle = \|f\|_{\Box (e)^d}^{2^d}\leq\eps^{2^d}\]
and part (ii) follows. Finally, since $|f|\leq \mu$ it follows
\[\langle f,Df\rangle \leq \|\mu\|_{\Box (e)^d}^{2^d} = 1+o(1)\]
as the linear forms $(x+\omega t e)_j$ are pairwise linearly independent (for each $j$) and $\nu$ satisfies the linear forms condition.
\end{proof}

We are in the position to apply the transference principle to decompose a function $0\leq f\leq \mu$ into the sum of a bounded function $g$ and a function $h$ which has small contribution to the expression in (1.2).

\begin{proof}[Proof of Theorem \ref{finite}] Let $\al>0$ and let $0\leq f\leq \mu$ be function such that $\EE f\geq\al$, where $\mu$ is a pseudo-random $d$-measure on $\Z_N^d$. We apply Theorem B, with $Y=\Z_N^d$, $\de=1/2$ and $\eta>0$. Note that since $\mu$ is a measure one has that $\|\mu\|_{L^1}=\EE\mu=1+o(1)$. Since $\|\ \|_{\Box (e)^d}$ is a \emph{QAP} norm with respect to the set $X=\{f:Y\to\R,\ |f|\leq\mu\}$, it follows that there is an $\eps>0$ such that if \eq\|\mu-1\|_{\Box (e)^d}<\eps\ee then there is a decomposition $f=g+h$ such that
\eq 0\leq g\leq 2\ \ \ \ \ \ \text{and}\ \ \ \ \ \ \|h\|_{\Box (e)^d}<\eta.\ee
Since $\mu$ is pseudo-random $\|\mu-1\|_{\Box (e)^d}=o(1)$  thus (4.3) holds for large enough $N$. Using this decomposition together with Theorem A and Proposition \ref{neumann} one may write
\[\mathbb{E}(f(x)f(x+te_1)...f(x+te_d):x\in\mathbb{Z}_N^d,t\in\mathbb{Z}_N) =\]
\[=\ \mathbb{E}(g(x)g(x+te_1)...g(x+te_d):x\in\mathbb{Z}_N^d,t\in\mathbb{Z}_N)\ +\  O(\|h\|_{\Box (e)^d}) \geq c'(\al,e)-C_d \eta \geq c'(\al,e)/2\]
by choosing $\eta$ sufficiently small with respect to $\al$ and $e$. This proves Theorem \ref{finite}.
\end{proof}

\subsection{Proof of Theorem \ref{prime}.} Let us identify $[1,N]^d$ with $\Z_N^d$. First we show that constellations in $\Z_N^d$ defined by $e$ which are contained in a box $B\subs [1,N]^d$ of size $\eps N$, are in fact genuine constellations contained in $B$. We say that $e=\{e_1,\ldots,e_d\}\in\Z^{d^2}$ is \emph{primitive} if the segment $[0,e]$ does not contain any other lattice points other than its endpoints in $\Z^{d^2}$ considered as a lattice point in $\Z^{d^2}$.
Let us also define the positive quantity $\tau(e)$ by
\[\tau (e)=\inf_{m\notin\{0,e\},\,x\in [0,e]} |m-x|_\infty\ \ \ \ \text{where}\ \ \ \ |x|_\infty=\max_{1\leq d^2} |x_j|\]
$m$ is running through the lattice points $\Z^{d^2}$ other than $0$ and $e$.

\begin{lemma} \label{modN} Let $0<\eps<\tau(e)$. Let $N$ be sufficiently large, and let $B=I^d$ be a box of size $\eps N$ contained in $[1,N]^d\simeq\Z_N^d$.
If there exist $x\in\Z_N^d$ and $t\in\Z_N\backslash\{0\}$ such that $x\in B$ and $x+te\subs B$ as a subset on $\Z_N^d$, then there exists a scalar $t'\neq 0$ such that $x+t'e\subs B$ also as a subset of $\Z^d$.
Moreover if $e$ is primitive (and $1\leq t<N$) then one may take $t'=t$ or $t'=t-N$.
\end{lemma}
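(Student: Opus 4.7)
The plan is to translate the given mod-$N$ covering relation into a Diophantine approximation statement about lattice points near the segment $[0,e] \subset \R^{d^2}$, and then to invoke the defining property of $\tau(e)$. First I would pick the representative $t \in \{1, \ldots, N-1\}$ of the nonzero class in $\Z_N$ and, for each $i$, let $y_i \in B \subs [1,N]^d$ be the representative of $x + t e_i \in \Z_N^d$; coordinate-wise congruence then yields vectors $k_i \in \Z^d$ with $x + t e_i = y_i + N k_i$ in $\Z^d$. Because $x$ and $y_i$ both lie in $B = I^d$, which has side length $\eps N$, I would note that $|y_i - x|_\infty \leq \eps N$, and hence $|t e_i - N k_i|_\infty \leq \eps N$ for every $i$.

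Next I would assemble the $d$ vectors $k_i$ into a single lattice point $k = (k_1, \ldots, k_d) \in \Z^{d^2}$ and set $s = t/N \in (0, 1)$. Dividing the previous bound by $N$ gives $|s e - k|_\infty \leq \eps$, while $s e$ visibly lies on the segment $[0, e] \subset \R^{d^2}$. By the very definition of $\tau(e)$, any $k \in \Z^{d^2} \setminus \{0, e\}$ has $\ell^\infty$-distance at least $\tau(e) > \eps$ from every point of $[0, e]$, so $k$ must equal $0$ or $e$.

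I would conclude by a case split. If $k = 0$ then $x + t e_i = y_i \in B$ already holds in $\Z^d$, so $t' = t$ works; if $k = e$, meaning $k_i = e_i$ for every $i$, then $x + (t - N) e_i = y_i \in B$ in $\Z^d$, and $t' = t - N$ is nonzero because $1 \leq t \leq N - 1$. This gives the \emph{moreover} statement and a fortiori the main existence claim. I do not expect any serious obstacle: the key conceptual point is that the single lattice vector $k \in \Z^{d^2}$ bundles all $d$ wrapping integers $k_i$ simultaneously, allowing one to appeal to $\tau(e)$ in one stroke. Note also that the hypothesis $\eps < \tau(e)$ implicitly forces $e$ to be primitive, since otherwise a lattice point $e' = e/q$ with $q \geq 2$ would lie in $[0, e] \setminus \{0, e\}$ and make $\tau(e) = 0$.
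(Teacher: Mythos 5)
Your proof is correct and follows essentially the same route as the paper's: translate the wrap-around relation into $|te_i-Nk_i|_\infty\leq\eps N$, bundle the $k_i$ into a single point of $\Z^{d^2}$, divide by $N$, and invoke the definition of $\tau(e)$ to force $k\in\{0,e\}$, yielding $t'=t$ or $t'=t-N$. The only (harmless) difference is in handling primitivity: the paper reduces to a primitive $e'$ via $e=se'$, while you observe that $0<\eps<\tau(e)$ already forces $e$ to be primitive, which is a valid and slightly cleaner observation.
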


\begin{proof} First, note that one can assume $e$ is primitive as $x+te=x+tse'$ for a fixed primitive $e'$ and $s\in\N$. By our assumption, there is an $x\in [1,N]^d$ and $t\in [1,N-1]$ such that $x\in B$ and $x+te_j\in B+(N\Z)^d$ for all $1\leq j\leq d$. Thus for each $j$ there exits $m_j\in\Z^d$ such that $|te_j-Nm_j|_\infty \leq\eps N$ and hence $|\la e-m|_\infty\leq\eps$, where $m=\{m_1,\ldots,m_d\}\in\Z^{d^2}$ and $\la=t/N$. Since $0<\la<1$ and $\eps<\tau(e)$ this implies that $m=0$ or $m=e$. If $m=0$ then $|te|_\infty \leq\eps N$ and since $x\in B$ it follows that $x+te\subs B\subs\Z^d$. If $m=e$ then $|(t-N)e_j|_\infty\leq\eps N$ thus $x+(t-N)e\subs B\subs\Z^d$, so $x+t'e\subs B$ as a subset of $\Z^d$.
This proves the lemma.
\end{proof}

Let us briefly recall the pseudo-random measure $\nu$ defined in Sec.9 \cite{GT}. Let $w=w(N)$ be a sufficiently slowly growing function (choosing $w(N)\ll \log\log\,N$ is sufficient as in \cite{GT}) and let $W=\prod_{p\leq w} p$ be the product of primes up to $w$. For given $b$ relative prime to $W$ define the modified von Mangoldt function $\bar{\La}_b$ by
\eq \bar{\La}_b (n)= \left\{ \begin{array}{ll}
         \frac{\phi(W)}{W}\,\log (Wn+b) & \mbox{if $Wn+b$ is a prime};\\
         0 & \mbox{otherwise}.\end{array} \right. \ee
where $\phi$ is the Euler function. Note that by Dirichlet's theorem on the distribution of primes in residue classes one has that $\sum_{n\leq N} \bar{\La}_b (n)=N (1+o(1))$. Also, if $A\subs\PP^d$ is of positive relative $\al$ and if $\bar{\La}_b^d:=\otimes^d \bar{\La}_b$ is the $d$-fold tensor product of $\bar{\La}_b$ the it is easy to see that there exists a $b$ such that
\eq \limsup_{N\to\infty} N^{-d} \sum_{x\in [1,N]^d} \1_A (x) \bar{\La}_b^d (x)\,>\,\al/2\ee
We will fix such $b$ and choose $N$ sufficiently large $N$ for which the expression in (4.6) is at least $\al/2$. Let $R=N^{d^{-1}2^{-d-5}}$ and recall the Goldston-Yildirim divisor sum \cite{GT}, \cite{GY1}
\[\La_R(n)=\sum_{d|n,d\leq R} \mu(d)\,\log(R/d)\] $\mu$ being the Mobius function. For given small parameters $0<\eps_1<\eps_2<1$ (whose exact values will be specified later) recall the Green-Tao measure
\eq \nu(n)= \left\{ \begin{array}{ll}
         \frac{\phi(W)}{W}\,\frac{\La_R(Wn+b)^2}{\log\,R} & \mbox{if $\eps_1N\leq n\leq\eps_2 N$};\\
         1 & \mbox{otherwise}.\end{array} \right. \ee
Note that $\nu(n)\geq 0$ for all $n$, and also it is easy to see that
for $N$ sufficiently large, one has that
\eq\nu(n)\geq d^{-1}2^{-d-6}\,\bar{\La}_b (n)\ee for all $\eps_1 N\leq n\leq \eps_2 N$.
Indeed, this is trivial unless $Wn+b$ is a prime. In that case, since $\eps_1 N>R$, $\La_R(Wn+b)=\log\,R=d^{-1}2^{-d-5}\log\,N$ and (4.8) follows.

\begin{proof} [Proof of Theorem \ref{prime}.] Set $\mu =\otimes^d \nu$, and let
\eq g(x):= c_d\,\bar{\La}_b^d (x)\,\1_A(x)\,\1_{[\eps_1 N,\eps_2 N]^d} (x)\ \ \ \ \ \ (c_d=d^{-d}2^{-d^2-6d})\ee
Then by (4.8) one has that $g(x)\leq\mu(x)$ for all $x\in\Z_+^d$.
By (4.6) one may choose a sufficiently large number $N'$ for which
\eq (N')^{-d}\sum_{x\in [1,N']^d} \1_A (x) \bar{\La}_b^d (x)\,>\,\al/2\ee
and a prime $N$ such that
\[ (1-\frac{\al}{100 d})N'\leq\eps_2 N \leq N'\]
If $\eps_1$ is such that $\eps_1/\eps_2\leq \al/100 d$, then by the Prime Number Theorem in arithmetic progressions
\eq (N')^{-d} \sum_{x\in [1,N']^d\backslash [\eps_1N,\eps_2N]^d} \bar{\La}_b^d (x)\,\leq\, \al/10\ee
It follows from (4.10) and (4.11)
\eq
N^{-d}\sum_{x\in [1,N']^d} g(x)\ \geq\ c_d\, N^{-d} \sum_{x\in [\eps_1 N,\eps_2 N]^d} \1_A(x) \bar{\La}_b^d (x)\ \geq\  c_d \eps_2^d \al/4
\ee

Using the identification $[1,N]^d\simeq \Z_N^d$, one has that $\EE(g(x):\,x\in\Z_N^d)\geq\al'$ (\ with $\al'=c_d^d\eps_2^d\al/4$), and $0\leq g(x)\leq \mu (x)$ for all $x$. Thus, save for proving that the measure $\nu$ is $d$-pseudo-random, Theorem \ref{finite} implies that
\[\EE(g(x)g(x+te_1)\ldots g(x+te_d):\ x\in\Z_N^d,t\in\Z_N)\,\geq\,c'(\al,e)>0.\]
Note that the contribution of trivial constellations, corresponding to $t=0$, is at most $O(N^{-1}\log^d N)$, as $|\bar{\La}_b^d|\leq \log^d N$ uniformly on $[1,N]^d$. Since the support of $g$ is contained in $A\cap [\eps_1 N,\eps_2 N]^d$, Lemma \ref{modN} implies that $A\cap [\eps_1 N,\eps_2 N]^d$ must contain genuine constellations of the form $\{x,x+te_1,\ldots,x+te_d\}$ as a subset of $\Z^d$. Choosing an infinite sequence of $N$'s it follows that $A$ contains infinitely many constellations defined by $e$.
\end{proof}

\section{Appendix: The correlation condition.}

To complete the proof of Theorem \ref{prime}, one needs to show that the measure $\nu$ defined in (4.7) satisfies both the linear forms conditions and the $(d,2^d)$ correlation conditions  given in (1.4). Since the measure $\nu$ is the same (apart from the slight change in the interval where $\nu\equiv 1$) is the one given in \cite{GT} (see Definition 9.3, there), the linear forms condition is already established in Prop. 9.8 in \cite{GT}. It turns out that the arguments given in \cite{GT} (see Prop. 9.6, Lemma 9.9 and Prop.9.10) generalize in a straightforward manner to obtain the more general $(m_0,m_1)$ correlation condition for any given specific values of $m_0$ and $m_1$.

\begin{prop}\label{correlation}
For a fixed $m_0,m_1$, there
exists a function $\tau$ such that \[ \mathbb{E}\tau^k=O_k(1)\] and
also \eq \mathbb{E}(\prod_{i=1}^{m_1}\prod_{j=1}^{m_0}
\nu(\phi_i(y)+h_{i,j}):y\in\Z_N^r)\leq
\prod_{i=1}^{m_0}(\sum_{1\leq j<j'\leq m_0}\tau(h_{i,j}-h_{i,j'}))\ee
where the $\phi_i:\Z_N^r\rightarrow\Z_N$
are pairwise linearly independent linear forms.
\end{prop}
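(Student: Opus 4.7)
The plan is to adapt the argument of Green and Tao (\cite{GT}, Section 10), which proves an analogous correlation estimate in the special case $m_1=1$, $\phi_1(y)=y$. The only genuinely new ingredient required here is a prime-by-prime local computation that uses the pairwise linear independence of the forms $\phi_i$; thereafter the Mellin/contour machinery and the definition of $\tau$ carry over with only notational changes.

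First I would reduce, by the standard truncation/expansion device of \cite{GT}, to bounding the quantity in which every $\nu(n)$ is replaced by the Goldston-Yildirim expression $(\phi(W)/W\log R)\,\La_R(Wn+b)^2$ (the boundary range on which $\nu\equiv 1$ is absorbed by the same expansion). Expanding each $\La_R^2$ as a double divisor sum over $d_{i,j},d'_{i,j}\leq R$ and pushing the expectation in $y$ inside gives
\[
\Big(\tfrac{\phi(W)}{W\log R}\Big)^{m_0m_1}\sum_{d_{i,j},d'_{i,j}}\prod_{i,j}\mu(d_{i,j})\mu(d'_{i,j})\log\tfrac{R}{d_{i,j}}\log\tfrac{R}{d'_{i,j}}\,\EE_y\prod_{i,j}\1_{[d_{i,j},d'_{i,j}]\mid W\phi_i(y)+Wh_{i,j}+b}.
\]
By the Chinese Remainder Theorem the $y$-expectation factors over primes $p>w$. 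For each such $p$ I would analyse the set $S$ of pairs $(i,j)$ with $p\mid[d_{i,j},d'_{i,j}]$: whenever $S$ contains two pairs $(i,j),(i,j')$ with the same $i$, compatibility forces $p\mid h_{i,j}-h_{i,j'}$ (a ``resonance''); and whenever $S$ involves pairs with $s$ distinct values of $i$, the pairwise linear independence of the $\phi_i$ modulo $p$ cuts the joint congruence on $y$ to density exactly $p^{-s}$. This local step is where I expect the main obstacle: the finitely many primes at which two of the $\phi_i$'s become linearly dependent modulo $p$ must be shown to contribute only an $O_{m_0,m_1}(1)$ multiplicative error, mirroring Prop.~9.6 of \cite{GT}.

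Once the local factorisation is established, Mellin inversion as in Lemma~9.9 of \cite{GT} converts the divisor sum into a contour integral in $m_0m_1$ complex variables whose main contribution is $1+o(1)$, decorated by local factors $\prod_{p\mid h_{i,j}-h_{i,j'}}(1+O(p^{-1/2}))$ arising from the resonances. Defining $\tau(h)=\prod_{p\mid h,\,p>w}(1+O(p^{-1/2}))$ in the style of (9.12) of \cite{GT} and using the crude inequality $\prod_{k=1}^K a_k\leq\sum_{k=1}^K a_k^K$ for non-negative $a_k$, the product of resonance factors is dominated by $\prod_i\sum_{j<j'}\tau(h_{i,j}-h_{i,j'})$, which is the required form. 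The moment estimate $\EE\tau^k=O_k(1)$ then follows from a direct Euler product computation, exactly as in \cite{GT}.
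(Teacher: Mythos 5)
Your proposal follows essentially the same route as the paper: reduce to a Goldston--Yildirim-type estimate for $\EE\prod_{i,j}\La_R^2(W(\phi_i(y)+h_{i,j})+b)$, analyze the local factors prime by prime using the pairwise linear independence of the $\phi_i$ modulo $p$ (the paper's local factor lemma), run the Green--Tao contour-integral machinery, and take $\tau$ and its moment bounds directly from \cite{GT}, with the same crude product-to-sum inequality at the end. The one obstacle you flag --- primes at which two forms become linearly dependent modulo $p$ --- does not actually arise: for $p\leq w(N)$ the $W$-trick makes every form $\equiv b\not\equiv 0$ modulo $p$, and for $p>w(N)$ the bounded integer coefficients keep the forms pairwise independent modulo $p$, exactly as in Lemma 10.1 of \cite{GT}.
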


Let us first note that the arguments of Lemma 9.9 and Prop. 9.10 of \cite{GT} applies to our case and it is enough to establish the following inequality (see Prop. 9.6 \cite{GT})
\[\EE\,(\prod_{i=1}^{m_1}\prod_{j=1}^{m_0} \La_R^2 (W(\phi_i(y)+h_{i,j})+b):\ y\in B)\]
\eq\ \ \ \ \ \  \leq\ C_M \left(\frac{W\log\,R}{\phi(W)}\right)^M\ \prod_{i=1}^{m_1} \prod_{p|\triangle_i} (1+O_M(p^{-1/2}))\ee
where $M=m_1 m_0$ and $B$ is a box of size at most $R^{10M}$. Moreover one can assume that $h_{i,j}\neq h_{i,j'}$ for all $i$, $j\neq j'$.

The next step is, following \cite{GT}, to write the the expression
\[
\mathbb{E}(\prod_{i=1}^{M} \Lambda_R^2(\theta_i(y)):y\in B),\] where
 $\theta_i=W(\phi_{\lfloor
i/m_1\rfloor}(y)+h_{\lfloor i/m_1\rfloor,\,(i \,(p)})+b$ ($\lfloor
x\rfloor$ is the floor function, $i\, (m_1)$ is $i$ modulo $m_1$),
to as a contour integral of the the following form plus a small error
\eq (2\pi
i)^{-M}\int_{\Gamma_1}...\int_{\Gamma_1}F(z,z')\prod_{j=1}^M \frac{R^{z_j+{z'}_j}}{z_j^2 {z'}_j^2} dz_j dz'_j,\ee
where $z=(z_1,...,z_M)$, $z'=(z'_1,...,z'_M)$, and function $F(z,z')$ is taking form of an Euler product
\[F(z,z')=\prod_p E_p(z,,z'),\] where \[ E_p(z,z')= \sum_{X,X'\subseteq
[M]}\frac{(-1)^{|X|+|X'|}\omega_{X\bigcup X'}(p)}{p^{\sum_{j\in
X}z_j+\sum_{j\in X'}z'_j}}.\] The function $\omega$ relates this
expression to the particular forms. Specifically\[
\omega_X(p)=\mathbb{E}(\prod_{i\in X }\mathbf{1}_{\theta_i\equiv 0
\,(p)}:x\in\mathbb{Z}_N^r).\]

\begin{lemma} (Local factor estimate).
Set the intervals $I_i=[(i-1) m_1+1,im_1]$ as a partition of $[M]$.
For $\alpha\in I_i$, the homogeneous part of $\theta_{\alpha}$ is
$W\phi_i$. Also, set $\Delta_i=\prod_{j<j;\,j,j'\in
I_i}|h_{i,j}-h_{i,j'}|$.The following estimates hold: $\omega_X(p)$:
\begin{enumerate}
\item If $p\leq w(N)$, then $\omega_X(p)=1$ if $|X|=0$, and is 0
otherwise.

\item If $p> w(N)$ and $|X|=0$, then $w_X(p)=1$.

\item If $p> w(N)$ and $X\subseteq I_i$ is nonempty, we
have $w_X(p)=p^{-1}$ when $|X|=1$, and $w_X(p)\leq p^{-1}$ when
$|X|>1$. In the latter case, if $p\nmid\Delta_\alpha$, we have that
$\omega_X(p)=0$.

\item If $p> w(N)$ and $X\cap I_i\neq\emptyset$ and $X\cap I_{i'}\neq\emptyset$ for some $i\neq i'$, we
have $\omega_X(p)\leq p^{-2}$ .
\end{enumerate}
\end{lemma}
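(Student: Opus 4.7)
The plan is to reduce each factor in the product defining $\omega_X(p)$ to a single affine condition on $y \in \Z_N^r$ modulo $p$, and then in each of the four cases analyze the resulting system by its rank. Writing $\theta_\alpha(y) = W\phi_i(y) + Wh_{i,j(\alpha)} + b$ for $\alpha$ in the block $I_i$ (so the homogeneous part is $W\phi_i$ and $j(\alpha)$ indexes the shift within the block), the condition $\theta_\alpha(y) \equiv 0 \pmod p$ is indeed one affine constraint on $y$.

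For case (1), when $p \leq w(N)$ we have $p \mid W$, so $\theta_\alpha \equiv b \pmod p$; since $\gcd(b, W) = 1$ we have $b \not\equiv 0 \pmod p$, hence each indicator vanishes and the product is $0$ for nonempty $X$. Case (2) is just the empty-product convention. In the remaining cases $p \nmid W$, so $W$ is invertible mod $p$ and the condition $\theta_\alpha(y) \equiv 0 \pmod p$ is equivalent to $\phi_i(y) \equiv -W^{-1}b - h_{i,j(\alpha)} \pmod p$.

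For case (3), with $X \subseteq I_i$ nonempty, every constraint in $X$ is a condition on the single quantity $\phi_i(y) \pmod p$. When $|X| = 1$ the single condition $\phi_i(y) \equiv c \pmod p$ cuts out a fraction exactly $1/p$ of $y \in \Z_N^r$, since $\phi_i$ reduces to a nonzero linear form mod $p$ (automatic for $p > w(N)$, as $w(N)$ eventually exceeds the bounded integer coefficients of the $\phi_i$) and linear forms equidistribute on $\Z_N^r$ when $N$ is prime and $N \neq p$. When $|X| > 1$ the constraints are simultaneously satisfiable iff all the shifts $h_{i,j(\alpha)}$, $\alpha \in X$, agree mod $p$, giving $\omega_X(p) \in \{0, 1/p\}$ and hence the bound $\omega_X(p) \leq 1/p$. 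If moreover $p \nmid \Delta_i$, all the $h_{i,j}$ are pairwise distinct mod $p$, so no two distinct constraints can coexist and $\omega_X(p) = 0$.

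For case (4), pick $\alpha \in X \cap I_i$ and $\alpha' \in X \cap I_{i'}$ with $i \neq i'$; these yield two constraints of the form $\phi_i(y) \equiv c \pmod p$ and $\phi_{i'}(y) \equiv c' \pmod p$. Because $\phi_i, \phi_{i'}$ are pairwise linearly independent over $\Z$, their reductions mod $p$ remain linearly independent for $p > w(N)$ sufficiently large, and equidistribution in $\Z_N^r$ gives at most $1/p^2$ for just these two conditions; the remaining indicators in $X$ can only decrease the fraction, so $\omega_X(p) \leq 1/p^2$. The one step that requires care is the equidistribution claim, since the $\phi_i$ originally take values in $\Z_N$ while the divisibility condition is imposed mod $p$; this is settled by lifting $y$ to $\{0, \ldots, N-1\}^r$ and using standard Fourier/Gauss-sum bounds together with $N$ being prime and coprime to $p$, exactly as in the proofs of Lemma 9.9 and Proposition 9.6 of \cite{GT}.
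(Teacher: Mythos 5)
Your proof is correct and follows essentially the same route as the paper: reduce each $\theta_\alpha\equiv 0\ (p)$ to an affine condition on $\phi_i(y)$ mod $p$, dispose of $p\leq w(N)$ via $p\mid W$ and $\gcd(b,W)=1$, get exactly $p^{-1}$ for a single nonzero form, treat $|X|>1$ inside a block by compatibility of the shifts mod $p$ (forcing $\omega_X(p)=0$ when $p\nmid\Delta_i$), and use pairwise linear independence of $\phi_i,\phi_{i'}$ mod $p$ to get the $p^{-2}$ bound across blocks. Your closing remark about passing from averages over $\Z_N^r$ (or a box) to exact local densities is a reasonable technical caveat that the paper itself glosses over, but it does not alter the argument.
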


\begin{proof}
When $p\leq w(N)$, then $W \phi_i+b\equiv b \, (p)$, giving the
first result. The second statement is trivial.

For the third statement, let us start with $X\subseteq I_i$ with
$|X|=1$. Then we have\[
\mathbb{E}(\mathbf{1}_{W(\phi_i(y)+h_{i,\,j})+b\equiv 0 \,
(p)}:y\in\mathbb{Z}_N^r)=p^{-1}\] for any fixed $j$, proving the
first part. The second part requires an estimate of \[
\mathbb{E}(\mathbf{1}_{W(\phi_i(y)+h_{i,\,j})+b\equiv 0 \,
(p)}\mathbf{1}_{W(\phi_i(y)+h_{i,\,j'})+b\equiv 0 \,
(p)}:y\in\mathbb{Z}_N^r),\] with $j\neq j'$. If
$p|\,|h_{\alpha,\,j}-h_{\alpha,\,j'}|$, then the we are left with
simply a single equation ($p\nmid W$), and may refer to the first
part. When $p\nmid\Delta_\alpha$, $\omega_X(p)=0$ as $h_{i,\,j}$ is
not congruent to  $h_{i,\,j',}$ modulo $p$.

For the last statement, we have the upper bound \[
\mathbb{E}(\mathbf{1}_{W(\phi_i(y)+h_{i,\,j})+b\equiv 0 \,
(p)}\mathbf{1}_{W(\phi_i'(y)+h_{i',\,j'})+b\equiv 0 \,
(p)}:y\in\mathbb{Z}_N^r)\] for some $i\neq i'$ and $j,j'$. The forms
$\phi_i$ and $\phi_{i'}$ are linearly independent modulo $p$ (see
the proof of Lemma 10.1 in \cite{GT}), hence we have the
intersection of two distinct linear algebraic sets, which has size
at most $p^{r-2}$.
\end{proof}

The terms $E_p$ in the Euler product can be separated as \[
E_p(z,z')=1-\mathbf{1}_{p>w(N)}\sum_{j=1}^M(p^{-1-z_j}+p^{-1-z'_j}-p^{-1-z_j-z'_j})\]\[
+\sum_{i=1}^{m_1}\mathbf{1}_{p>w(N);\,
p|\Delta_i}\lambda^{(i)}_p(z,z')+\sum_{X\bigcup X'\nsubseteq
I_\alpha,\, \alpha\in[m_1];\, |X\bigcup
X'|>1}\frac{O_M(p^{-2})}{p^{\sum_Xz_j+\sum_{X'}z'_j}},\]

where
\[\lambda^{(i)}_p(z,z')=\sum_{X\bigcup X'\subset I_i;\, |X\bigcup
X'|>1}\frac{O_M(p^{-1})}{p^{\sum_Xz_j+\sum_{X'}z'_j}}.\]

We define the terms
\[E_p^{(0)}=1+\sum_{i=1}^{m_1}\mathbf{1}_{p>w(N);\,
p|\Delta_i}\lambda^{(i)}_p(z,z'),\] and factorize
$E_p=E_p^{(0)}E_p^{(1)}E_p^{(2)}E_p^{(3)}$ as follows:\[
E_p^{(1)}=\frac{E_p}{E_p^{(0)}\prod_{j=1}^M(1-\mathbf{1}_{p>w(N)}p^{-1-z_j})(1-\mathbf{1}_{p>w(N)}p^{-1-z'_j})(1-\mathbf{1}_{p>w(N)}p^{-1-z_j-z'_j})^{-1}}\]
\[
E_p^{(2)}=\prod_{j=1}^M(1-\mathbf{1}_{p\leq
w(N)}p^{-1-z_j})^{-1}(1-\mathbf{1}_{p\leq
w(N)}p^{-1-z'_j})^{-1}(1-\mathbf{1}_{p\leq w(N)}p^{-1-z_j-z'_j})\]
\[E_p^{(3)}=\prod_{j=1}^M(1-p^{-1-z_j})(1-p^{-1-z'_j})(1-p^{-1-z_j-z'_j})^{-1},\]
and set $G_i=\prod_p E_p^{(i)}$, noting that \[ G_3=
\prod_{j=1}^M\frac{\zeta(1+z_j+z'_j)}{\zeta(1+z_j)\zeta(1+z'_j)}.\]

The the following is the analogue of lemma 10.6 in \cite{GT}. To state it, Let us recall the domain $\mathcal{D}_\sigma^M$ to be the
set\[ \{ z_j,z'_j: \Re z_j,\Re z'_j\in(-\sigma,100)\, ,\, 1\leq
j\leq M\} .\] We also have the norms on for $f$ analytic on
$\mathcal{D}_\sigma^M$, denoted
$||f||_{\mathcal{C}^k(\mathcal{D}_\sigma^M)}$, given by\[
||f||_{\mathcal{C}^k(\mathcal{D}_\sigma^M)}=\sup||(\frac{\partial}{\partial
z_1})^{\alpha_1}... (\frac{\partial}{\partial
z_M})^{\alpha_1}(\frac{\partial}{\partial z'_1})^{\alpha_1}
...(\frac{\partial}{\partial
z'_M})^{\alpha_1}f||_{\mathcal{L}^{\infty}(\mathcal{D}_\sigma^M)},\]
where the supremum is taken over all
$\alpha_1,...,\alpha_M,\alpha'_1,...,\alpha'_M$ whose sum is at most
$k$.

\begin{lemma}
Let $0<\sigma=1/(6M)$. Then the Euler products $G_i$ are absolutely
convergent for $i=0,1,2$ in the domain $\mathcal{D}_\sigma^M$, and
hence represent analytic functions on this domain. We also have the
estimates\[
||G_0||_{\mathcal{C}^r(\mathcal{D}_\sigma^M)}=O_M(\log(R)/\log\log(R))^r\prod_{p|\prod_{i=1}^{m_1}\Delta_i}(1+O_M(p^{2M\sigma-1}))\]
\[||G_0||_{\mathcal{C}^M(\mathcal{D}_{1/6M}^M)}\leq
\exp(O_M(\log^{1/3}(R)))\]\[
||G_1||_{\mathcal{C}^M(\mathcal{D}_{1/6M}^M)}\leq O_M(1)\]\[
||G_2||_{\mathcal{C}^M(\mathcal{D}_{1/6M}^M)}\leq O_{M,w(N)}(1)\]\[
G_0(0,0)=\prod_{i=1}^{m_1}\prod_{p|\Delta_i}(1+O_M(p^{-1/2}))\]\[
G_1(0,0)=1+o_M(1)\]\[ G_2(0,0)=(W/\phi(W))^M,\] where the first
bound is for all $0\leq r\leq M$.

\end{lemma}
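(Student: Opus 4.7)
The plan is to follow the strategy of Lemma 10.6 in \cite{GT}, adapting the book-keeping to the $m_1$-fold structure in which the forms $\{\theta_\alpha\}_{\alpha\in[M]}$ are partitioned into the blocks $I_1,\ldots,I_{m_1}$. First I would rewrite each local Euler factor $E_p(z,z')$ with $p>w(N)$ as $1$ plus an error, using the Local Factor Estimate to separate the error into (a) the ``diagonal'' contributions $p^{-1-z_j},\,p^{-1-z'_j},\,p^{-1-z_j-z'_j}$ that are absorbed into the $\zeta$-product $G_3$; (b) the ``within-block'' corrections $\lambda^{(i)}_p(z,z')$, arising only when $p\mid\Delta_i$ and of size $O_M(p^{-1+2M\sigma})$ on $\mathcal{D}_\sigma^M$; and (c) the ``cross-block'' corrections of size $O_M(p^{-2+2M\sigma})$. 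The explicit factorization $E_p=E_p^{(0)}E_p^{(1)}E_p^{(2)}E_p^{(3)}$ is then designed precisely so that $G_0,G_1,G_2,G_3$ capture respectively the exceptional primes dividing some $\Delta_i$, the residual cross-block fluctuations, the small-prime ($p\leq w(N)$) contribution, and the dominant $\zeta$-factor.

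With $\sigma=1/(6M)$ the absolute convergence on $\mathcal{D}_\sigma^M$ of $G_0,G_1,G_2$ follows directly from these bounds. For $G_1$ the summability $\sum_p p^{-2+2M\sigma}<\infty$ yields $\|G_1\|_{\mathcal{C}^M(\mathcal{D}_{1/(6M)}^M)}=O_M(1)$, and $G_1(0,0)=1+o_M(1)$ because only primes above $w(N)$ contribute and each contributes $1+O_M(p^{-2})$. For $G_2$ the product is finite (supported on $p\leq w(N)$), so the $O_{M,w(N)}(1)$ bound is trivial; at $z=z'=0$ the factor telescopes to $(W/\phi(W))^M$ using $1-1/p=\phi(p)/p$. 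For $G_0$, each exceptional local factor contributes $1+O_M(p^{2M\sigma-1})$, immediately giving the product $\prod_{p\mid\prod_{i=1}^{m_1}\Delta_i}(1+O_M(p^{2M\sigma-1}))$, and specializing at $z=z'=0$ yields $G_0(0,0)=\prod_{i=1}^{m_1}\prod_{p\mid\Delta_i}(1+O_M(p^{-1/2}))$.

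For the $\mathcal{C}^k$-norm estimates, differentiating a local factor brings down at most $\log p$ per derivative, offset by at most $p^{M\sigma}$ loss from the enlarged domain; combining with the absolute convergence already established controls $\|G_i\|_{\mathcal{C}^r}$ by summing the resulting series. The $(\log R/\log\log R)^r$ factor in the bound on $\|G_0\|_{\mathcal{C}^r}$ comes from estimating $\sum_{p\mid\prod_i\Delta_i}\log p/p^{1-2M\sigma}$ by a Mertens-type bound together with the divisor estimate $\prod_i\Delta_i=O(R^{O_M(1)})$, which limits the number of exceptional primes. The main technical obstacle is uniformity in the $h_{i,j}$'s when bounding $G_0$: the set of exceptional primes depends on the $\Delta_i$'s, so the estimate must be phrased divisor-wise. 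Here the factor $E_p^{(0)}$ has been defined as $1+\sum_{i=1}^{m_1}\mathbf{1}_{p\mid\Delta_i}\lambda^{(i)}_p$ rather than as a product over $i$, which automatically handles primes dividing several $\Delta_i$ simultaneously without double-counting and makes the stated divisor bound independent of any finer structure of the $h_{i,j}$'s.
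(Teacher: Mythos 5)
Your proposal follows the same route as the paper, which simply runs the Green--Tao Lemma 10.3/10.6 machinery with $\Delta=\prod_{i=1}^{m_1}\Delta_i$ and only treats the $G_0(0,0)$ evaluation separately, exactly as you do by noting that $E_p^{(0)}=1+\sum_i\mathbf{1}_{p\mid\Delta_i}\lambda_p^{(i)}$ is dominated by $\prod_i(1+|\lambda_p^{(i)}(0,0)|)$ with the crude bound $O_M(p^{-1/2})$ per factor. The decomposition into within-block and cross-block corrections, the convergence and $\mathcal{C}^k$ estimates, and the evaluations of $G_1(0,0)$ and $G_2(0,0)$ all match the paper's (largely cited) argument.
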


\begin{proof}
The estimates proceed exatctly as in Lemma 10.3 and Lemma 10.6 in \cite{GT} with
$\Delta=\prod_{i=1}^{m_1}\Delta_i$, barring the statement about
$G_0(0,0)$. To see this, we have \[ G_0(0,0)=
\prod_{p|\Delta}E_p^{(0)}=\prod_{p|\Delta}(1+\sum_{i=1}^{m_1}
\lambda_p^{(i)}(0,0))\leq \prod_{i=1}^{m_1}\prod_{p|\Delta_i}(1+
|\lambda_p^{(i)}(0,0)|)\]
 and we crudely have
$|\lambda_p^{(i)}(0,0)|=1+O_M(p^{-1/2})$.
\end{proof}

The expression in (5.3) takes the form

\[ (2\pi
i)^{-M}\int_{\Gamma_1}...\int_{\Gamma_1}G(z,z')\prod_{j=1}^M
\frac{\zeta(1+z_j+z'_j)R^{z_j+z_j'}}{\zeta(1+z_j)\zeta(1+z'_j)z_j^2z_j^{'2}}
dz_jdz'_j\] with $G=G_0 G_1 G_2$.  To estimate it let us recall the following general result on contour integration from \cite{GT}, see Lemma 10.4 there.

\begin{lemma}\label{int}(Goldston-Yildirim \cite{GT}\cite{GY2})
Let $R$ be a positive number. If $G(z,z')$ is analytic in the $2M$
variables on $\mathcal{D}_\sigma^M$ for some $\sigma>0$, and we have
the estimate \[
||G||_{\mathcal{C}^k(\mathcal{D}_\sigma^M)}=\exp(O_{M,\sigma}(\log^{1/3}(R))),\]
then \[(2\pi
i)^{-M}\int_{\Gamma_1}...\int_{\Gamma_1}G(z,z')\prod_{j=1}^M
\frac{\zeta(1+z_j+z'_j)R^{z_j+z_j'}}{\zeta(1+z_j)\zeta(1+z'_j)z_j^2z_j^{'2}}dz_jdz'_j\]\[
=G(0,...,0)\log^M(R)+\sum_{j=1}^MO_{M,\sigma}(||G||_{\mathcal{C}^j(\mathcal{D}_\sigma^M)})\log^{M-j}(R)+O_{M,\sigma}(\exp(-\delta
\sqrt{\log(R)}))\] for some $\delta>0$.
\end{lemma}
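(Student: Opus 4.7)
The plan is to evaluate the integral by contour deformation, extracting the main term from residues at the origin and bounding the tail using classical estimates on the Riemann zeta function. The essential input is the Laurent expansion $\zeta(1+u) = u^{-1}+\gamma+O(u)$, from which one gets
\[
\frac{\zeta(1+z_j+z'_j)}{\zeta(1+z_j)\zeta(1+z'_j)}\ =\ \frac{z_j z'_j}{z_j+z'_j}\bigl(1 + O(|z_j|+|z'_j|)\bigr)
\]
near $(0,0)$, so the integrand in each pair $(z_j,z'_j)$ has principal part $R^{z_j+z'_j}/\bigl(z_j z'_j (z_j + z'_j)\bigr)$, and the remaining factors are analytic in a neighborhood of the origin.

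For the main term I would Taylor expand $G(z,z')$ around the origin and compute residues pair-by-pair. The elementary identity $(2\pi i)^{-2}\oint\oint R^{z+z'}/\bigl(zz'(z+z')\bigr)\,dz\,dz' = \log R$, obtained by taking residues successively at $z' = 0$ and $z = 0$ (where a double pole produces the logarithm), yields one factor of $\log R$ per pair, so the leading contribution is $G(0,0)\log^M R$. Taylor monomials of $G$ of total degree $k$ in the $z,z'$ variables reduce the multiplicity of the pole in $k$ of the pairs by one, producing $\log^{M-k}R$ terms with coefficients controlled by $\|G\|_{\mathcal{C}^k(\mathcal{D}_\sigma^M)}$; this accounts for the middle sum in the statement.

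For the error term I would deform each contour $\Gamma_1$ from a small circle around $0$ to a Hankel-type contour enclosing $0$ but running out to a vertical segment on the line $\Re z_j = -c(\log R)^{-2/3}$, which still lies inside $\mathcal{D}_\sigma^M$ once $R$ is large. On this contour the Vinogradov--Korobov zero-free region yields the standard bound $|\zeta(1+z)|^{-1} = O(\log^{2/3}|z|(\log\log|z|)^{1/3})$, while $|R^{z_j+z'_j}|$ decays like $\exp(-c'\sqrt{\log R})$ away from the origin. Combined with the subpolynomial growth hypothesis $\|G\|_{\mathcal{C}^0} = \exp(O(\log^{1/3}R))$, which is exactly the slack compatible with this cancellation, one arrives at the $O(\exp(-\delta\sqrt{\log R}))$ error. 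The main obstacle throughout is the bookkeeping when cross-derivatives of $G$ couple different pairs $(z_j,z'_j)$: one must verify that no single pair can contribute more than one factor of $\log R$, so that a Taylor term of total order $k$ is indeed associated with $\log^{M-k}R$ and not a higher power. This combinatorial check, together with the zero-free region estimates, is carried out in detail in Lemma 10.4 of \cite{GT} (following \cite{GY2}), and I would follow that derivation directly.
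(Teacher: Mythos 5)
Your proposal is correct and matches the paper's treatment: the paper does not prove this lemma at all, but simply quotes it as Lemma 10.4 of \cite{GT} (following Goldston--Y{\i}ld{\i}r{\i}m \cite{GY2}), which is exactly the derivation you sketch and then defer to. Your outline of that argument --- residues at the origin giving $G(0,\dots,0)\log^M R$, Taylor terms of order $j$ paired with $\log^{M-j}R$ and the $\mathcal{C}^j$ norms, and the zero-free-region contour shift producing the $\exp(-\delta\sqrt{\log R})$ error --- is a faithful summary of the cited proof.
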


Estimate (5.2) follows easily applying Lemma 5 (with $\sigma =1/6M$) to $G=G_0G_1G_2$ using Lemma 4, which in turn implies
Proposition~\ref{correlation}, where the function $\tau$ is defined precisely as in \cite{GT}. This finishes the proof of Theorem \ref{prime}.


\end{document}